\theoremstyle{plain}
\newtheorem{theorem}{Theorem}[section]
\newtheorem{lemma}[theorem]{Lemma}
\theoremstyle{definition}
\theoremstyle{remark}
\numberwithin{equation}{section}
\definecolor{Orange}{cmyk}{0, 0.6, 0.8, 0}
\newtheorem{thm}{Theorem}[section]
\newtheorem{prop}[thm]{Proposition}
\newtheorem{defn}[thm]{Definition}
\newtheorem{conj}[thm]{Conjecture}
\newtheorem{claim}[thm]{Claim}
\begin{document}

\title{Long-term Averages of the Stochastic Logistic Map}



\author{
\name{Maricela Cruz\textsuperscript{a} and Austin Wei\textsuperscript{b}  and Johanna Hardin\textsuperscript{c}  and Ami Radunskaya\textsuperscript{c}\thanks{CONTACT Ami Radunskaya. Email: Ami.Radunskaya@pomona.edu}}
\affil{\textsuperscript{a}Kaiser Permanente Washington Health Research Institute, Seattle WA; \textsuperscript{b}The Ohio State University, Columbus OH; \textsuperscript{c}Dept. of Mathematics and Statistics, Pomona College, Claremont CA}}

\maketitle

\doublespacing

\linespread{1.5}

\begin{abstract}
The logistic map is a nonlinear difference equation well studied in the literature, used to model self-limiting growth in certain populations. It is known that, under certain regularity conditions, the stochastic logistic map, where the parameter is varied according to a specified distribution, has a unique invariant distribution. In these cases we can compare the long-term behavior of the stochastic system with that of the deterministic system evaluated at the average parameter value. Here  we examine the relationship between the mean of the stochastic logistic equation and the mean of orbits of the deterministic logistic equation at the expected value of the parameter. We formally prove that, in some cases, the addition of noise is beneficial to the populations, in the sense that it increases the mean, while for other ranges of parameters it is detrimental. A conjecture  based on numerical evidence is  presented at the end.
\end{abstract}

\begin{keywords}
difference equation; invariant distribution; random dynamical system; long-term average
\end{keywords}

\newpage

\section{Introduction}

The logistic map is a nonlinear first-order difference equation first used to describe the evolution of biological populations. It was formulated by Pierre Francois Verhulst in $1838$ \cite{verhulst1838} but became a phenomenon in the $1940$'s when John Von Neumann and Stanislav Ulam suggested its use as a random number generator \cite{Ulam1947}. Since the seminal work of Robert May in 1976 \cite{May}, the logistic model has become a canonical equation describing self-limiting growth, and in the past fifty years it has been used extensively in both its discrete and continuous versions.  Our understanding of the behavior of the logistic map can be extended to    more general models, and, despite its simplicity, the logistic model gives rise to much of the qualitative behavior observed in population data, capturing both the stable and chaotic aspects of growth. For example, many protozoan populations such as yeast and grain beetles illustrate logistic growth in laboratory studies, exhibiting both stable and chaotic behavior \cite{Ecol}.  In the 1980's, economists used logistic models and their relatives to show that, surprisingly at the time, chaotic behavior could occur in models of capital accumulation \cite{Boldrin1986, Deneckere1986}.

If the environment or growth rate is noisy, it is useful to use a logistic model that includes either additive or parametric noise. For example, when making decisions in noisy environments, we might choose to use a stochastic model.    Nishimura, Stachurski, et al. give a survey of discrete time decision models, both deterministic and stochastic, in which the goal is to optimize savings and consumption \cite{Nishimura2004}.  They note the importance of understanding the existence and uniqueness of invariant measures in the stochastic case, a topic which we address in this paper. Many other forecasting problems can be formulated in terms of discrete stochastic maps \cite{Satoh2001}; for example, in \cite{Satoh2021}, Satoh uses discrete stochastic logistic and Gompertz models to predict when software is reliable enough to be released.  In related work,  Erguler and Stumpf  used the stochastic logistic map to contrast the ``true'' dynamics of a system and observational noise, thereby addressing the reliability of inference from present data \cite{Kamil}. The stochastic logistic model fits experimental data as well.  For example, in \cite{Descheemaeker2020}  predictions of the abundance of microbial communities using stochastic generalized Lotka-Volterra models provide good fits to experimental models.

In the study of the deterministic logistic map the focus is on the existence and stability of fixed points and periodic orbits.  In the stochastic case, invariant measures, or distributions, take the place of periodic points. If these invariant measures are stable, then they can be used to predict long-term behavior.
Diaconis and Freeman \cite{Diaconis1999} develop a ``contraction in the log average" criterion for the existence of a stable distribution of Markov processes.  The idea of a stable distribution, i.e., a stable fixed point of the Foias operator described in Section \ref{sec:ergodicity}, can be studied in both deterministic and stochastic maps.  One standard example is the tent map; the two-dimensional tent map has been explored in this context by Nakamura and Mackey in \cite{Nakamura2021}.  
Arethreya and Dai have studied the stochastic logistic map \cite{Athreya2000} and have found situations in which there are multiple invariant measures, so that the long-term behavior of the system depends on the initial state \cite{Athreya2002}.  For another example of conditions that ensure a unique invariant measure for stochastic one-dimensional maps, see Nakamura's study of non-expanding piecewise linear maps which can exhibit stable periodic behavior even in the presence of noise \cite{Nakamura2018}.  For a compendium of recent results and some new formulations, the reader is referred to \cite{Bhattacharya2007, Lasota}.

In making predictions using stochastic models, a natural question is: ``How do these predictions change as the amplitude of the noise increases?"  
In this paper we present a novel result that describes changes in the long-term average of the logistic map with multiplicative noise.  In other words, we study the logistic map when the parameter randomly fluctuates in a prescribed range, and answer the question: ``Does the presence of noise increase or decrease the long-term average?"  It turns out that the answer to this question depends on the periodicity of the stable orbits of the underlying deterministic system.  Our theoretical results can be applied to a range of predictive stochastic logistic models. 

Our paper contributes to the field in two distinct ways.  As far as we know, we are the first to recognize that the impact of noise on the deterministic system is a function of the periodicity of the stable orbits of that system. Indeed, the impact of the periodicity {\bf reverses} for each period doubling, for those periods we have studied. Using a known result (which is only relevant in the stable period one region) we show (Theorem \ref{PeriodOneTheorem}) that the stochastic long-term average is smaller than the deterministic long-term average in the stable period one region.  Our second big contribution is to formally prove that the stochastic long-term average is greater than the deterministic long-term average in the period two region; this is the content of Theorem \ref{PeriodTwoTheorem}. We further develop our understanding of the switching behavior with extended simulations in periods one, two, and four.

The paper is organized as follows: in Section \ref{sec:background} we present notation and general results on the stochastic logistic map.  The main results of the paper are found in Section \ref{sec:simulations}, where we prove that the long-term average increases in the period-one regime and decreases in the period-two regime.  The proof of the period-two result is quite technical, and the details are given in Section \ref{sec:appendix}.  A discussion section follows in Appendix \ref{sec:discussion}, where we present a more general ``flip-flop" conjecture.

\section{Background}
\label{sec:background}

\subsection{The Map}

The logistic map is a one-parameter family which gives the difference equation:

$$ x_{n+1}  =  \lambda x_n(1 - x_n), $$

\noindent where $ x_{n} \in [0,1]$ can be interpreted as a measure of the population in the $n$th time interval, and $\lambda \geq 0$ represents the intrinsic growth rate.

In this paper, we discuss the logistic map under two different conditions, when $\lambda $ is fixed and when it is random. In order to write out the map under the two conditions some notation is needed.
Define,
$$ \vec{\lambda} =  (\lambda_1,  \lambda_2, \ldots  \, \, \, );  \lambda_i \stackrel{iid}{\sim} {\mathcal U}[a, b],$$

\noindent
where ${\mathcal U}[a, b]$ indicates the uniform distribution with support on the interval $[a,b]$, and $ \sigma (\vec{\lambda})$ to be the shift operator such that $\sigma : (\lambda_1, \lambda_2 , \ldots ) \mapsto  (\lambda_2, \lambda_3,\ldots)$.  Let $\pi(\vec{\lambda}) = \lambda_1$ be the projection onto the first coordinate of $\vec{\lambda}$.  Then

\begin{equation} S_{\lambda}(x) = \lambda x (1 - x) \, \, 
\end{equation}
denotes the logistic map when $\lambda $ is fixed, and

\begin{equation}  \label{Tmapeq}
 T(\vec{\lambda},x) =  (\sigma(\vec{\lambda}), \pi(\vec{\lambda}) x (1 - x) ) \, \,
\end{equation}
denotes the stochastic version of the logistic map as a skew product.  Note that $S_{\lambda_0}(x) = T(\vec{\lambda},x)$ with $\vec{\lambda} = (\lambda_0, \lambda_0, \lambda_0, \dots) $.
 An example of the output of the stochastic logistic map is shown in Figure~\ref{Tmapfig}, where $\lambda_i \stackrel{iid}{\sim} {\mathcal U}[2, 2.5].$

\begin{figure} [H]
\begin{center}
\includegraphics[scale=.5]{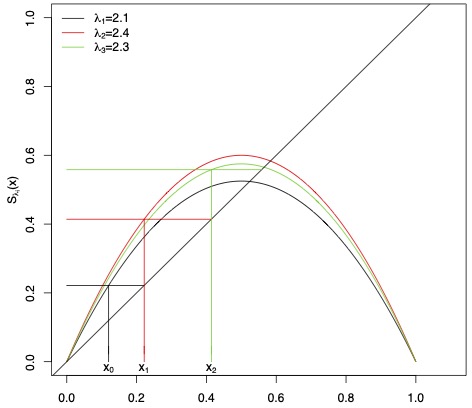}
\caption{$T^i(\vec{\lambda},x_0)$ for $i = 1, 2, 3$  where $\vec{\lambda}= (2.1, 2.4, 2.3, \dots) $, and $x_0 = 0.12$. The graphs of $S_{2.1}(x)$, $S_{2.4}(x)$ and $S_{2.3}(x)$ are shown,  with the line $y=x$ superimposed.}
\label{Tmapfig}
\end{center}
\end{figure}

\noindent We denote iterates of $T(\vec{\lambda},x)$ by $T^i(\vec{\lambda},x),$ where 
$$T^2(\vec{\lambda},x) = T(T(\vec{\lambda},x)),\quad \dots \quad T^i(\vec{\lambda},x) = T(T^{i-1}(\vec{\lambda},x)) .$$
Define $\Pi(\vec{\lambda}, x)  = x$ as the projection of the pair $(\vec{\lambda}, x)$ onto the state space, $[0,1]$.
We call the sequence $x_0, x_1 = \Pi\left({T(\vec{\lambda}, x_0)}\right),  \dots, x_n = \Pi\left({T^n(\vec{\lambda},x_0)}\right), \dots $ the {\it orbit of $x_0$} under $T(\vec{\lambda},x)$.  If the initial value is a random variable independent of $\lambda$, then the set of orbits  form a Markov Chain with state space some subset of $[0,1]$, depending on the distribution of $\lambda$.  We will use capital letters to denote this Markov Chain: $X_1 = \Pi\left({ T(\vec{\lambda},X_0) }\right), \dots
X_{i+1} = \Pi\left({ T(\sigma^i(\vec{\lambda}),X_i) }\right) $.

Note that in order to keep $x$ in the interval $ [0,1]$ for all iterations (to keep the unit interval invariant) $ \lambda $ must lie in the interval $ [0,4]$.

\subsection{The Deterministic Logistic Map} \label{sec:deterministicbif}

The deterministic logistic map, $S_{\lambda}(x)$, has been well studied in the literature \cite{May, Devaney, Chaos}.  For our purposes, it is important to know that for $\lambda$ in the interval $(3, 1 + \sqrt{6})$, the two fixed points (at zero and $\frac{\lambda - 1}{\lambda}$) are unstable, and the logistic map has a stable period-2 cycle.  For these values of $\lambda$, almost all initial values in (0,1) have orbits that converge to the period two orbit \cite[p.~359]{Chaos}:

\begin{equation} p(\lambda) = \frac{(\lambda + 1) - \sqrt{(\lambda - 3)(\lambda + 1)}}{2 \lambda} \label{peq} \end{equation}  \begin{equation} q(\lambda) = \frac{(\lambda + 1) + \sqrt{(\lambda - 3)(\lambda + 1)}}{2 \lambda}. \label{qeq} \end{equation} 
As the parameter $\lambda$ increases beyond $1 + \sqrt{6}$, the map undergoes a sequence of period-doublings. More details about the deterministic logistic map, including a bifucation diagram and additional references, are given in Appendix \ref{sec:deterministic_background}.

\subsection{Bifurcation Diagram for the Stochastic Logistic Map}

We can numerically construct a bifurcation diagram for the stochastic logistic map by plotting the long-term behavior of a sample path as a function of the mean of a distribution of $\lambda$-values.  Figure~\ref{BifurcT} shows an example of such a diagram where the parameter is uniformly distributed in a small interval:  $\lambda \sim {\mathcal U}[\bar{\lambda} - \Delta \lambda, \bar{\lambda} + \Delta \lambda]$, $\Delta \lambda = 0.1$, and $\bar{\lambda}$ ranges from $0 + \Delta \lambda = 0.1$ to $4-\Delta \lambda = 3.9$.  For $\bar{\lambda} <  1 - \Delta \lambda = 0.9$, $S_\lambda(x)$ has a unique attracting fixed point at $x  = 0$ for all $\lambda \in (\bar{\lambda}-\Delta \lambda, \bar{\lambda} + \Delta \lambda)$.  Therefore, all orbits under $T(\vec{\lambda}, x)$  converge to 0, and the bifurcation diagram in Figure~\ref{BifurcT} shows this with a horizontal line at 0 for $\bar{\lambda} \in [0+\Delta \lambda, 1 - \Delta \lambda]$.  For $\bar{\lambda} \in (1 + \Delta \lambda = 1.1, 3 - \Delta \lambda = 2.9)$, $S_{\lambda}(x)$ has a non-zero attracting fixed point at $\displaystyle{\frac{\lambda -1}{\lambda}}$ for all $\lambda \in (\bar{\lambda}-\Delta \lambda, \bar{\lambda} + \Delta \lambda)$.  For these values of $\bar{\lambda}$,  orbits of $T(\vec{\lambda},x)$ eventually enter the interval
$\displaystyle{\left({ \frac{\bar{\lambda} - \Delta \lambda - 1}{\bar{\lambda} - \Delta \lambda},\frac{\bar{\lambda} + \Delta \lambda -1}{\bar{\lambda} + \Delta \lambda} }\right)}$ with probability 1 and, once in this interval, they never leave it.  Figure~\ref{BifurcT} shows a single curved strip of width $\frac{2 \cdot \Delta \lambda}{(\bar{\lambda} + \Delta \lambda)(\bar{\lambda} - \Delta \lambda)}$ centered at $\displaystyle{\frac{\bar{\lambda}^2 - \bar{\lambda} - (\Delta \lambda)^2}{\bar{\lambda}^2 - (\Delta \lambda)^2} }$.  Figure~\ref{BifurcT} also shows a distinct period 2 regime when $\bar{\lambda} \in (3 + \Delta \lambda, 1 + \sqrt{6} - \Delta \lambda)$.

 Bifurcations of families of stochastic logistic maps have also been explored for other types of distributions of the parameter values.  In \cite{Schenk-Hoppe1997}, for example, the ``dichotomous" case, where the distribution is supported on two $\lambda$-values, is studied.    Implicit in the exploration of a stochastic system via these bifurcation diagrams (and, indeed, in the presentation of most numerical simulations of random phenomena) is that they represent generic behavior, i.e., that almost all orbits have the same limiting distribution: we would get roughly the same bifurcation diagram regardless of which value we chose for $x_0$.   This characterizes {\it ergodicity}, which we define in the next section.

In the rest of this paper we are interested in statistical characterizations of orbits when $\lambda$-values are supported on small intervals that are completely contained in the stable periodic regimes of the deterministic logistic map.  In particular, we consider the average behavior along orbits of the map $T(\vec{\lambda}, x)$, where $\vec{\lambda}$ is an i.i.d. sequence of random variables $\lambda_1, \lambda_2, \dots $  with common distribution $g(\lambda)$.  In order to make statements about ``the" average behavior, we need to establish the existence of a well-defined measure for the distribution of $x$-values after many iterations.  This is the content of the next section.

\begin{figure} [h!]
\begin{center}
\includegraphics[width = .55\textwidth]{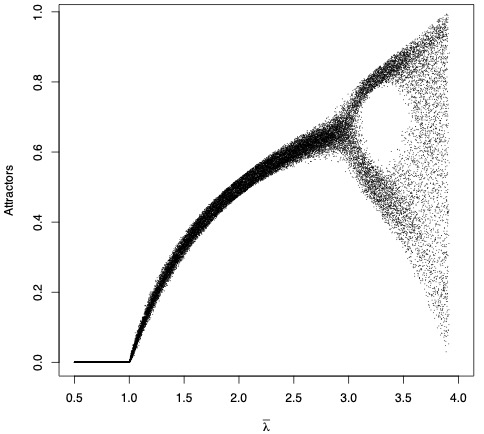}
\caption{ A bifurcation diagram for $T(\vec{\lambda}, x)$ with  $\Delta \lambda = 0.1$ and  $\lambda_i \stackrel{\ i.i.d.}{\sim} {\mathcal U}[\bar{\lambda} - \Delta \lambda,\bar{\lambda} + \Delta \lambda]$, $\bar{\lambda} \in [0 + \Delta \lambda, 4 - \Delta \lambda]$.  For each value of $\bar{\lambda}$, plotted on the horizontal axis, we begin with 100 values of $x_0$ chosen from a uniform distribution on $[0,1]$, and plot their 1000$^{th}$ iterate under $T(\vec{\lambda}, x)$ on the vertical axis, where $\vec{\lambda}$ is an i.i.d. sequence that is different for each simulation.  In other words, we simulated 100 sample paths of the Markov Chain defined by the stochastic logistic map for each starting value, $x_0$.  The periodic structure from the deterministic setting is still visible  for $\bar{\lambda}$-values in the period-2 regime, but the bifurcations are blurred.   }
\label{BifurcT}
\end{center}
\end{figure}

\subsection{Ergodicity and Stability}  \label{sec:ergodicity}
We now consider the stochastic logistic map as a map on distributions.  This requires some new definitions.

 Let ${\mathcal B}$ denote the Borel sigma-algebra on the state space $\Omega = [0,1]$ and ${\mathcal M}$ the space of probability measures defined on $(\Omega, {\mathcal B})$.
The stochastic map, $T(\vec{\lambda}, x)$ induces a map on ${\mathcal M}$ as follows.  Assume that $\vec{\lambda}$ is a sequence of i.i.d.\ random variables with common density $g(\lambda)$ supported on $[a,b] \subset [0, 4]$.  For $\mu \in {\mathcal M}$ define the operator on measures:  
\begin{equation} \label{eq:PF}
P^*\mu (A) = \int_0^4 \mu\left({S_{\lambda}^{-1}(A)}\right) g(\lambda) \, d \lambda 
\end{equation}

The operator $P^*$ is called the
{\it Foias operator} (\cite{Lasota}, Def. 12.4.2), and coincides with the {\it Perron-Frobenius operator}
when $T$ is an arbitrary map on the state space.

Fixed points of this map take the place of fixed points in the deterministic setting.

\begin{defn} An { \bf invariant measure} is a measure $\mu^* \in {\mathcal M}$,  such that, $P^* \mu^* (A) = \mu^*(A)$ for all $A \in {\mathcal B}$.  \end{defn}
Thus, an invariant measure is a fixed point of the operator, $P^*$.  
  We can also define  stability for stochastic dynamical systems via the operator $P^*$:
\begin{defn} Suppose $\mu^*$ is the unique invariant measure for the operator $P^*$.  Then the system $T(\vec{\lambda},x)$ is {\bf stable in distribution} if, for any $\mu \in {\mathcal M}$
$$ \left\vert{ \lim_{n \rightarrow \infty} (P^*)^n \mu (A)  - \mu^*(A) }\right\vert = 0 , \quad \forall \, A \in {\mathcal B} $$
where $P^*$ is the Foias operator associated with $T$.
\end{defn}

A set $A \in {\mathcal B}$ is invariant under the system  $ T(\vec{\lambda}, \cdot)$ if  $T^{-1}(\vec{\lambda}, A) = A$ 
for almost every $\vec{\lambda}.$

A probability measure $\mu$ is {\it ergodic} if $\mu(A) = 0$ or $1$ for every invariant set $A$.  Suppose $\mu^*$ is invariant and ergodic.  Then $\mu^*$ is the unique invariant probability, and the ergodic theorem tells us, that for any $A \in {\mathcal B}$, for almost every $x_0$, with respect to $\mu^*$:
\begin{equation} \label{eq:ergodic}
\lim_{n \rightarrow \infty} \frac{1}{n} \sum_{i=0}^{n-1} \chi_A(x_i) = \mu^*(A), 
\end{equation}
where $\chi_A(x)$ is the indicator function of the set $A$ and $\{ x_i\}_{i=0}^\infty$ is an orbit of $x_0$ under the stochastic map.  If, in addition, the invariant measure $\mu^*$ is stable, then 
$$ \mu^*(A) =  \lim_{n \rightarrow \infty} (P^*)^n\mu(A), \quad \forall \mu \in {\mathcal M}.$$
Thus, we can estimate the long-term behavior along a $\mu^*$-generic orbit by iterating simulated estimates of arbitrary distributions.  In fact, this is the justification behind the algorithm used to generate the stochastic bifurcation diagram shown in Figure \ref{BifurcT}.

The following result from \cite[p.~343]{Bhattacharya2007} shows that the stochastic logistic map with continuous parameter distributions supported in a given periodic regime is stable. 

Assume that $S_{\lambda}(x)$ has a stable periodic orbit with period $m$ for all $\lambda$ in the support of $g(\lambda)$.

\begin{prop} \label{thm:stable}
\cite{Bhattacharya2007} If
$$ E[\log \lambda] > 0 \text{ and } E[\left\vert{\log(4 - \lambda)}\right\vert] < \infty $$
then
\begin{enumerate}
\item the stochastic dynamical system $T(\vec{\lambda},x)$ has a unique invariant probability  measure $\mu^* \in {\mathcal M}$, and
\item $\displaystyle{\left\vert{ \lim_{n \rightarrow \infty}(P^*)^{mn}\mu(A) - \mu^*(A)}\right\vert = 0 }$ for all $A \in {\mathcal B}, \mu \in {\mathcal M}$, and $m$ is the period associated with the stable periodic orbit.
\end{enumerate}
\end{prop}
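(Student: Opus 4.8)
The plan is to read this as a stability statement for the Markov chain $X_0, X_1, \dots$ generated by the i.i.d.\ random maps $S_{\lambda_1}, S_{\lambda_2}, \dots$ on the compact state space $[0,1]$, and to prove it within the theory of iterated random \emph{monotone} maps via a splitting criterion of the Dubins--Freedman / Bhattacharya--Majumdar type \cite{Bhattacharya2007}, in the spirit of the ``contraction in the log average'' idea of Diaconis and Freedman \cite{Diaconis1999}. Concretely I would (i) locate a compact subinterval $J=[c,d]\subset(0,1)$ that attracts every orbit almost surely, (ii) use the assumed stable period-$m$ cycle to show that grouping the random maps into blocks of length $m$ yields an order-preserving random dynamical system, and (iii) verify a splitting condition for this block chain, from which the existence of a unique invariant measure and the convergence of $(P^*)^{mn}\mu$ both follow.

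The first step is where the two moment hypotheses enter. Near the repelling fixed point $0$ the map linearizes as $S_\lambda(x)\approx \lambda x$, so along an orbit $\log x_n \approx \log x_0 + \sum_{i=0}^{n-1}\log\lambda_i$; since $E[\log\lambda]>0$, the strong law of large numbers forces this random walk to drift to $+\infty$, pushing orbits away from $0$ so that $\liminf_n x_n \ge c$ for some $c>0$ almost surely. The hypothesis $E[\,|\log(4-\lambda)|\,]<\infty$ is then used to control the rare excursions that bring $x_n$ near $1/2$, where $S_\lambda$ attains its maximum $\lambda/4$ close to $1$ and the next iterate is kicked back toward $0$: integrability of $\log(4-\lambda)$ keeps these kicks from being so severe that the orbit can accumulate at $0$. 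Together these give that almost every orbit is eventually trapped in some $[c,d]\subset(0,1)$.

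The second and third steps exploit the assumption that $S_\lambda$ has a stable period-$m$ cycle throughout the support of $g$. The logistic map is unimodal, hence not monotone, but near the attracting cycle the $m$-fold composition $S_\lambda^m$ acts as a contraction and is order preserving, so passing to the block chain whose transition operator is $(P^*)^m$ produces a monotone random dynamical system on $J$. For such a system the splitting condition can be checked directly: because each $\lambda_i$ has a continuous density on a nondegenerate interval, there is positive probability that a block of maps sends all of $J$ below a fixed reference point and positive probability that it sends all of $J$ above it. The splitting theorem \cite{Bhattacharya2007} then gives a unique invariant measure $\mu^*$ for the block chain together with convergence $(P^*)^{mn}\mu \to \mu^*$ in the Kolmogorov metric, which is statement (2). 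Statement (1) follows formally: any $P^*$-invariant measure is also $(P^*)^m$-invariant, hence equals $\mu^*$ by uniqueness for the block chain, and $\mu^*$ is itself $P^*$-invariant because $P^*\mu^*$ is again fixed by $(P^*)^m$.

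The main obstacle is exactly the non-monotonicity of the logistic map: the splitting machinery applies to monotone maps, so the crux is to justify passing to the block iterate of length $m$ and to prove rigorously that near the attracting cycle these compositions are genuinely order preserving on a forward-invariant $J$, rather than merely attracting in probability. Equally delicate is verifying the splitting condition \emph{uniformly} over the continuously distributed parameter---producing one positive-probability event that pushes all of $J$ up and another that pushes all of $J$ down while keeping the orbit inside $(0,1)$ and bounded away from both endpoints. This is what forces the different phases of the periodic regime to communicate and collapses the potential multiplicity of invariant measures (cf.\ \cite{Athreya2002}) down to the single $\mu^*$; it is precisely the two moment hypotheses that keep the log-coordinate random walk non-degenerate and make these events possible.
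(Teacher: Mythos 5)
You should know at the outset that the paper contains no proof of this statement: Proposition \ref{thm:stable} is quoted from \cite[p.~343]{Bhattacharya2007}, together with the standing hypothesis, stated just above it, that $S_\lambda$ has a stable period-$m$ orbit for every $\lambda$ in the support of $g$. Judged as a reconstruction of that cited result, your outline assembles the right ingredients: the Athreya--Dai boundary argument is exactly where $E[\log\lambda]>0$ and $E[\,|\log(4-\lambda)|\,]<\infty$ enter (repulsion from $0$, control of the kicks through values near $\lambda/4$), and the machinery of the cited book is indeed built around splitting conditions for iterated monotone maps. Nevertheless, your sketch has two genuine gaps that keep it from being a proof.

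First, the monotonicity of the $m$-fold random compositions on the trapping region, which you yourself flag as ``the main obstacle'' and then leave open, is not merely delicate --- it is false on part of the regime the proposition covers. Already for $m=1$, if the support of $g$ contains $\lambda=2$, then the trapping interval contains the critical point $1/2$ and the maps $S_\lambda$ are not monotone on it; for $m=2$ the same happens when the support contains $1+\sqrt{5}$, which is exactly the configuration $p_+\le \frac12 \le p_-$ analyzed in the proof of Lemma \ref{LambdaClaim} and in Case 3 of Lemma \ref{Lemma2}, where the lower component of the support of $\mu^*$ straddles $1/2$ and the composition even has a critical point on the cycle at the superstable parameter. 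So the Dubins--Freedman/splitting theorem for monotone maps cannot be invoked for the block chain as you propose, and your argument is left with no engine for uniqueness; given the continuous density of $\lambda$, the natural replacement is an irreducibility-plus-minorization (Doeblin) argument for the $m$-step kernel on each component, which is a different mechanism than the one you describe. Second, your formal deduction of conclusion 1 is circular: you conclude that any $P^*$-invariant measure ``equals $\mu^*$ by uniqueness for the block chain,'' but the block chain does not have a unique invariant measure. For $m\ge 2$ the phase measures $\mu_p^*$ and $\mu_q^*$ and all their convex combinations are $(P^*)^m$-invariant, and $\delta_0$ is invariant for both $P^*$ and $(P^*)^m$ since $S_\lambda(0)=0$ for every $\lambda$, so uniqueness can only be asserted among measures on $(0,1)$. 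A correct argument proves uniqueness for $(P^*)^m$ restricted to each of the $m$ cyclic components, shows that any $P^*$-invariant measure on $(0,1)$ is supported on their union, and assembles $\mu^*$ as the equal-weight average of the component measures; relatedly, conclusion 2 cannot hold verbatim for every $\mu\in{\mathcal M}$ (a point mass in one component converges under $(P^*)^{mn}$ to that component's measure, and $(P^*)^{mn}\delta_0=\delta_0$), a caveat that affects the paper's transcription of the cited theorem as well as your sketch.
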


In this paper, we consider the situation when $g(\lambda)$ is the uniform density on an interval $[a,b]$, where $[a,b]$ is entirely within either the non-zero fixed point regime: (1,3), the period-2 regime: $(3,1+\sqrt{6})$, or the period-4 regime: $(1+\sqrt{6},3.54409)$.  In all cases, $a$ and $b$ are bounded away from 1 and 4,  so that $E[\log \lambda]> 0$ and
$E[\left\vert({ \log(4 - \lambda)}\right\vert]  < \infty$.  Therefore, Proposition \ref{thm:stable} applies and we know that the long-term distribution of almost all orbits is well-defined.  Thus, ``the expected value"  exists in the various periodic regimes, and  we are in a position to compare the means of the deterministic and stochastic logistic maps in these regimes.  Note that an explicit formula for the unique invariant measure is not known, but the existence and stability of the invariant measure ensure that numerical simulations should reflect the generic behavior of sample paths of the system.


We can verify the theoretical results numerically by exploring more closely ``slices" of the stochastic bifurcation diagram shown in Figure~\ref{BifurcT}.

The red-solid densities in Figure~\ref{ItPerOne} represent three different slices (panel (a), (b), and (c), respectively) of the bifurcation diagram of $T(\vec{\lambda},x)$ shown in Figure~\ref{BifurcT} using $\Delta \lambda = 0.024$,  with $\lambda$-values supported in different periodic regions.  Plot (a) is a vertical slice of a bifurcation diagram of $T(\vec{\lambda},x)$ with $ \lambda \sim {\mathcal U}[1.508 - \Delta \lambda, 1.508 + \Delta \lambda]$, within the non-zero fixed point regime.  The figure shows the empirical distribution of the $x$ values for different iterations, i.e., it gives a numerical approximation of iterates of the Foias operator, $P^*$ given in Equation \eqref{eq:PF}, applied to the uniform distribution.  To generate the numerical approximation, we start with $1000$ uniformly drawn random values of $X_0$.  This gives a numerical approximation to $\mu$,  the uniform distribution on $[0,1]$ indicated by the purple-two-dash curve in Figure~\ref{ItPerOne}. The blue-long-dash curve in panel (a) represents the empirical distribution of $X_1 = T(\vec{\lambda}, X_0)$ with $\lambda \sim {\mathcal U}[1.508 - \Delta \lambda, 1.508 + \Delta \lambda]$.  The other curves give the empirical distributions of $X_{10}, X_{50}, X_{100}$ and finally $X_{1000}$, in red-solid.  Plots (b) and (c) show the evolution of the uniform distribution under $P^*$ when the parameter values are in the period-2 and period-4 regions; the distributions were created using $\lambda \sim {\mathcal U}[3.208 - \Delta \lambda, 3.208 + \Delta \lambda]$ and $\lambda \sim {\mathcal U}[3.508 - \Delta \lambda, 3.508 + \Delta \lambda]$, respectively.  The periodicity of the deterministic systems is reflected in the multi-modality of the limiting distributions, although we note that these empirically derived distributions suggest asymmetries.
The numerical results demonstrate that the 50th iteration of $P^*$  gives a fairly close approximation to the stable invariant distribution, $\mu^*$.  The choice of $\Delta \lambda = 0.024$ was made so that the visual intuition becomes clear, but the theoretical results do not depend on any specific value of $\Delta \lambda$, as long as the support of $\lambda$ stays in a particular periodic regime. (See Section \ref{sec:deterministicbif}.)


\begin{figure} [H]
\begin{center}
\includegraphics[width=.48\textwidth]{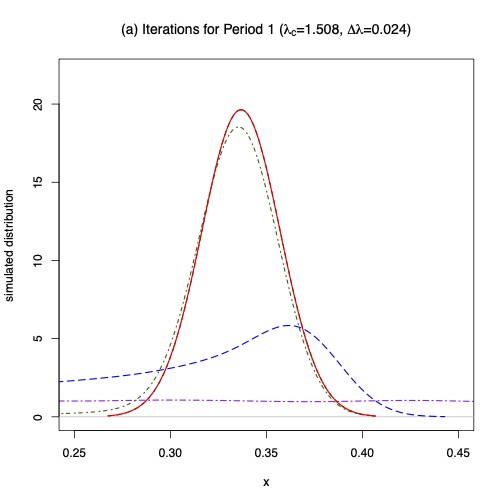} \includegraphics[width=.48\textwidth]{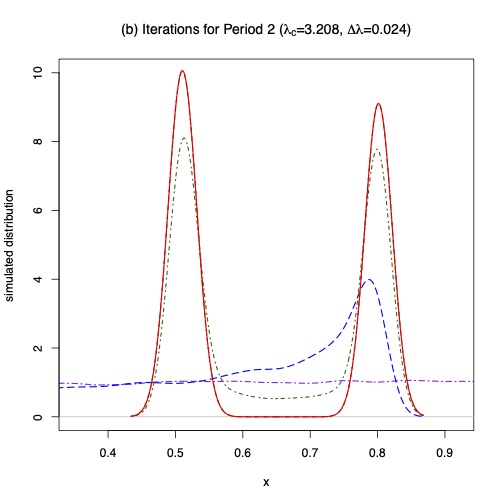} \includegraphics[width=.48\textwidth]{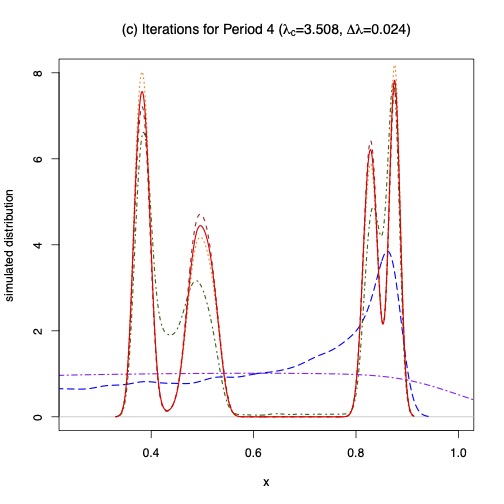} \includegraphics[width=0.48\textwidth]{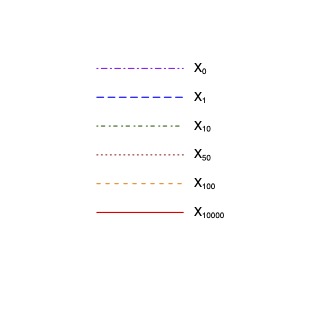}
\caption{ The distribution of the state variable with a uniform initial distribution. The initial, $1$st, $10$th, $50$th, $100$th, and $10000$th iterations are shown in purple-two-dash, blue-long-dash, green-dot-dash,  brown-dot-dot, orange-dash, and red-solid respectively.
The parameter values  in each of the three plots are distributed according to  (a) $\lambda \sim {\mathcal U}[1.508 - \Delta \lambda, 1.508 + \Delta \lambda]$, (b) $\lambda \sim {\mathcal U}[3.208 - \Delta \lambda, 3.208 + \Delta \lambda]$, and  (c) $\lambda \sim {\mathcal U}[3.508 - \Delta \lambda, 3.508 + \Delta \lambda]$.  Note that the red-solid densities in each plot give an empirical estimate of the invariant distribution for the given distribution on $\lambda$.  }
\label{ItPerOne}
\end{center}
\end{figure}

\section{Stochastically-induced changes in the average behavior.}

\label{sec:simulations}

\subsection{Mean versus Mean}

If $g(\lambda)$ has support in one of the periodic regimes, then Proposition \ref{thm:stable} gives (let $A = (0,1)$):

$$  \lim_{n \rightarrow \infty} \frac{1}{n} \sum_{i=0}^{n-1} T^i(\vec{\lambda}, x) = \int_0^1 x\, d\mu^*(x) = E_{\mu^*}[X], $$

for almost all initial values of $x.$

The result in this paper is a comparison of the expected value of $X$ under $T(\vec{\lambda}, \cdot)$ with the mean along the periodic orbit under $S_\lambda(x) = \lambda x(1-x)$, the deterministic map, using the expected value of $\lambda $. 
Similar investigations have been discussed in \cite{Haskell2005}  in the context of the Beverton-Holt equation, and in \cite{Hinow2016}, in the context of self-limiting exponential growth.   In the next section we compare the average behavior of the stochastic logistic map with the deterministic map in several parameter regimes, uncovering a pattern of alternating larger and smaller expected values under stochasticity.

\subsection{Period One \label{subsecP1}}

Suppose $\lambda \sim {\mathcal U}[a,b]$.  In Figure~\ref{HistOne} we compare the ``deterministic mean", i.e., the long-term average of orbits under $S_{\bar{\lambda}}$ where $\bar{\lambda} = \displaystyle{\frac{a + b}{2}}$, with the long-term behavior under the stochastic dynamical system, $T(\vec{\lambda}, \cdot)$.  The left panel of Figure~\ref{HistOne}  shows the stable fixed point at $ \bar{\lambda} = 1.508$ (red line), the limit set for almost all orbits under $S_{\bar{\lambda}}$.
In the same panel we plot the empirical distribution of $\left({ P^*}\right)^{10000}(\mu)$ (black histogram), where $\mu$ is the uniform distribution on $(0,1)$,  $\Delta \lambda = 0.024$, and $\lambda \sim {\mathcal U}[1.508 - \Delta \lambda, 1.508 + \Delta \lambda]$; the average of the empirical distribution is given by the blue line.  Because the system is stable, we can interpret $\left({ P^*}\right)^{10000}(\mu)$ as an estimate of the invariant distribution, $\mu^*$.  The right panel of Figure~\ref{HistOne} shows a magnification of the left panel.   Observe that the blue line is to the left of the red line, hence the (empirically estimated) average of  $\mu^*$ is less than the stable fixed point of the deterministic map when $ \bar{\lambda}= 1.508 $.   Since the stable fixed point is the average along almost every orbit of the deterministic map, Figure~\ref{HistOne} gives a visual comparison of the mean of the stochastic system, where the parameter values have mean $\bar{\lambda}$ (blue line),  with the mean of the deterministic system with parameter value  equal to $\bar{\lambda}$ (red line).

\begin{figure} [H]
\begin{center}
\includegraphics[width = .47\textwidth]{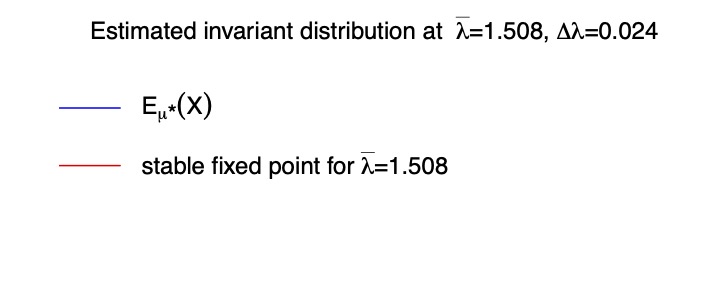} \quad
\includegraphics[width = .47\textwidth]{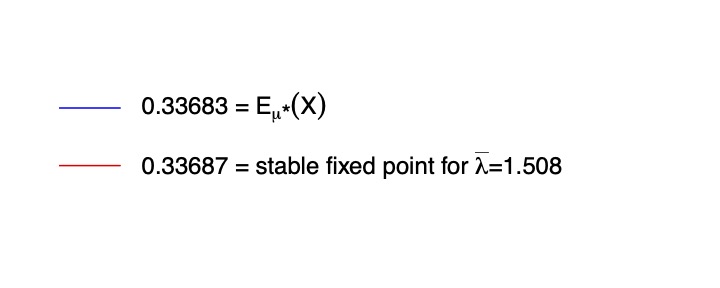}\\
\includegraphics[width = .47\textwidth]{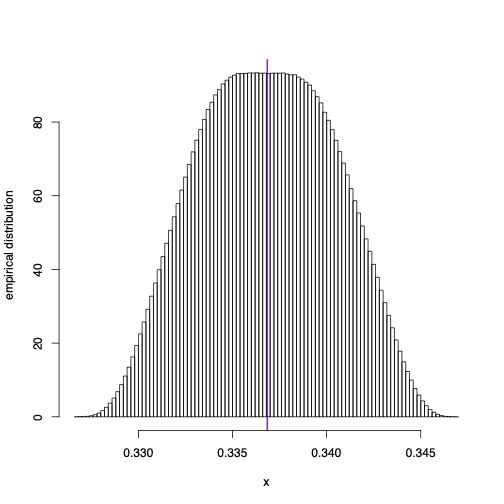} \quad
\includegraphics[width = .47\textwidth]{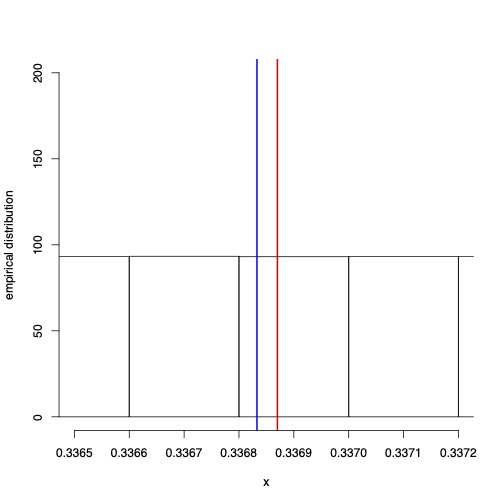}
\caption{An illustration of the result proved in Theorem \ref{PeriodOneTheorem}. The left panel shows an estimated histogram of the $10000^{th}$ iterate of $P^*$, the Foias operator associated with $T(\vec{\lambda},x)$.  In this simulation,  the parameter value at each iteration is given by $ \lambda \sim {\mathcal U}[1.508 - \Delta \lambda, 1.508 + \Delta \lambda]$ with $\Delta \lambda = 0.024$; 20,000 initial $x$-values were drawn from a uniform distribution on $[0,1]$.  Since the system has a unique stable invariant measure, the histogram is an estimate of the distribution of the limiting invariant measure, $\mu^*$.  The right panel shows a magnification of the central portion of the histogram, highlighting that the mean of the empirical histogram (blue line) is less than the stable fixed point of the logistic map with the average parameter value, $\bar{\lambda} = 1.508$ (red line).  }
\label{HistOne}
\end{center}
\end{figure}


The following theorem
is a theoretical validation of the simulation shown in Figure \ref{HistOne}. A similar result is proved in \cite{Haskell2005}, where it is noted that the proof, an application of Jensen's inequality, only depends on the convexity of the map.  This result shows that the ``stochastic mean" will always be less than the deterministic mean. In other words, the presence of randomness lowers the long-term average in the period-one regime.
While the proof of Theorem \ref{PeriodOneTheorem} is straightforward, the fact that the relationship between deterministic and stochastic means ``flips" in the period 2 regime is not intuitive.  This new result is proved in Theorem \ref{PeriodTwoTheorem}.

Define $x^*(\lambda) = \displaystyle{\frac{\lambda -1}{\lambda}}$ to be the non-zero fixed point of the map $S_{\lambda}(x)$.
Recall that this fixed point is {\it attracting} for $\lambda \in (1, 3)$ and, therefore, the C\'{e}saro average along almost any orbit converges to $x^*(\lambda)$:
$$ \lim_{n \rightarrow \infty} \frac{1}{n} \sum_{i = 0}^{n-1} S^{i}_\lambda(x) = \frac{\lambda -1}{\lambda} = x^*(\lambda)
\quad \hbox{if } \; \lambda \in (1,3). $$

\begin{thm} Let $g(\lambda)$ be a continuous density whose support is in the period one region, $(1,3) $.  Then the mean of $X$ under the stochastic logistic map $T(\vec{\lambda}, \cdot)$  is {\bf less} than the stable fixed point of the deterministic map 
$S_{\bar{\lambda}}(x)$, where  $\bar{\lambda} = \int_0^4 \lambda g(\lambda) d \lambda$. 

\label{PeriodOneTheorem}
\end{thm}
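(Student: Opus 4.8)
The plan is to combine the stationarity of $\mu^*$ with the independence of the parameter from the current state to obtain an exact identity for $m := E_{\mu^*}[X]$, and then to upgrade that identity to a strict inequality by Jensen's inequality.

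The first step is a one-step mean identity at stationarity. Because the coordinates of $\vec{\lambda}$ are i.i.d., the parameter applied at a given step, $\pi(\sigma^n(\vec{\lambda}))$, is independent of the current state $X_n$, which depends only on $X_0$ and the earlier parameters. Writing the update as $X_{n+1} = \pi(\sigma^n(\vec{\lambda}))\, X_n(1-X_n)$ and taking expectations with $X_n \sim \mu^*$ (so that $X_{n+1}\sim\mu^*$ by invariance), the expectation factors, giving
$$ m \;=\; \bar{\lambda}\, E_{\mu^*}\!\big[X(1-X)\big] \;=\; \bar{\lambda}\big(m - E_{\mu^*}[X^2]\big). $$

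Next I would apply Jensen's inequality. Since $x\mapsto x^2$ is strictly convex, $E_{\mu^*}[X^2] > (E_{\mu^*}[X])^2 = m^2$ whenever $\mu^*$ is not a point mass. Here $\mu^*$ cannot be degenerate: a point mass $\delta_c$ is invariant under $P^*$ only if $c$ is a common fixed point of every $S_\lambda$ with $\lambda$ in the support of $g$, but the nonzero fixed point $x^*(\lambda)=\frac{\lambda-1}{\lambda}$ genuinely varies with $\lambda$, and $\delta_0$ is excluded because $0$ is repelling for $\lambda>1$ (indeed the support of $\mu^*$ is eventually trapped in $\big(\frac{a-1}{a},\frac{b-1}{b}\big)$, which is bounded away from $0$). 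Thus the inequality is strict, and $E_{\mu^*}[X(1-X)] = m - E_{\mu^*}[X^2] < m - m^2 = m(1-m)$, so that
$$ m \;=\; \bar{\lambda}\, E_{\mu^*}[X(1-X)] \;<\; \bar{\lambda}\, m(1-m) \;=\; S_{\bar{\lambda}}(m). $$

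Finally I would convert $m < S_{\bar{\lambda}}(m)$ into the claimed bound using the geometry of the logistic map. The function $h(x) = S_{\bar{\lambda}}(x) - x = x\big((\bar{\lambda}-1)-\bar{\lambda} x\big)$ is a downward parabola vanishing exactly at $0$ and at $x^*(\bar{\lambda}) = \frac{\bar{\lambda}-1}{\bar{\lambda}}$, so $h>0$ precisely on the interval $(0, x^*(\bar{\lambda}))$. Since $m>0$ and $h(m)>0$, we conclude $m \in (0, x^*(\bar{\lambda}))$, i.e.\ $m < x^*(\bar{\lambda})$, which is the assertion. The one delicate point is the strictness in the Jensen step: the whole chain collapses to an equality if $\mu^*$ is a point mass, so the argument hinges on establishing the nondegeneracy (positive variance) of the invariant measure, which is exactly what the varying fixed point $x^*(\lambda)$ and the instability of $0$ supply; the remaining manipulations are routine.
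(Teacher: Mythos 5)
Your proposal is correct and takes essentially the same route as the paper: both derive the stationarity identity $m = \bar{\lambda}\left({m - E_{\mu^*}[X^2]}\right)$ from invariance of $\mu^*$ plus the independence of $\lambda$ and $X$, apply Jensen's inequality to replace $E_{\mu^*}[X^2]$ by $m^2$, and then rearrange the resulting quadratic inequality to conclude $m \le \frac{\bar{\lambda}-1}{\bar{\lambda}} = x^*(\bar{\lambda})$. If anything, your handling of strictness is more careful: you explicitly rule out a degenerate invariant measure (point mass), whereas the paper only remarks that the inequality is strict unless $E_{\mu^*}(X^2) = 0$.
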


\begin{proof}
Let $ \mu^* $ be the unique invariant distribution of the Foias operator associated with $T(\vec{\lambda}, \cdot)$. Since $\lambda$ is independent of $X$:
\begin{eqnarray*}
E_{\mu^*}[X] &=& \int_0^1 x d\mu^*(x) \\
&=& \int_0^1 x dP^*(\mu^*)(x) \\
&=& 
\int_0^1 x \int_0^4 d\mu^*\left({S_\lambda^{-1}(x)}\right) g(\lambda) d \lambda
\quad \quad \quad \hbox{by the definition of } P^* \\
&=& \int_0^1 y(1-y) \int_0^4 \lambda  g(\lambda) d\lambda \, \, d\mu^*(y)  \quad \quad \quad 
\hbox{letting } x = S_{\lambda}(y) = \lambda y(1-y)\\
&=& 
\bar{\lambda} \int_0^1 (y  - y^2)  d\mu^*(y)
\quad \quad \quad \quad \quad \quad \quad \quad
\hbox{since } \bar{\lambda} = \int_0^4 \lambda g(\lambda) d \lambda \\
&=& \bar{\lambda} \left({  E_{\mu^*}[X] - E_{\mu^*}[X^2] }\right) \\
&\le& \bar{\lambda} \left({ E_{\mu^*}[X] - E_{\mu^*}[X]^2 }\right)  \quad \hbox{since, by Jensen's inequality }
E_{\mu^*}[X]^2 \le E_{\mu^*}[X^2]
\end{eqnarray*}
\begin{eqnarray*}
&\Rightarrow  (\bar{\lambda} - 1) E_{\mu^*}[X] - \bar{\lambda} E_{\mu^*}^2[X] \ge 0 \\
&\Rightarrow  E_{\mu^*}[X]\left({\bar{\lambda} -1 - \bar{\lambda} E_{\mu^*}[X] }\right) \ge 0 \\
&\Rightarrow E_{\mu^*}[X] \le \frac{\bar{\lambda} -1}{\bar{\lambda}} = x^*(\bar{\lambda}) 
\end{eqnarray*}
since $E_{\mu^*}[X] \ge 0 $.  Furthermore, the inequality is strict unless $E_{\mu^*}[X^2] = 0 $.
\end{proof}

Note: the final inequality in the proof of Theorem~\ref{PeriodOneTheorem} holds for $\lambda $ in any region. However, 
$\displaystyle{\frac{\bar{\lambda} - 1}{\bar{\lambda}} }$ is the average of the deterministic system only in the non-zero stable period-one regime, $\lambda \in (1,3)$.   Therefore, the proof of theorem~\ref{PeriodOneTheorem} does not tell us anything about the relationship between the stochastic and deterministic means when $g(\lambda)$ is supported in other regions.

\subsection{Period Two} \label{sec:period2}

Figure~\ref{HistTwo} shows a histogram of $\left({P^*}\right)^{10000}(\mu) \approx \mu^*$ where the parameter values have a uniform distribution:  $\lambda \sim {\mathcal U}[3.208 - \Delta \lambda, 3.208 + \Delta \lambda]$, $\Delta \lambda = 0.024$.  As in Figure~\ref{HistOne},   the empirical stochastic and deterministic means with $\bar{\lambda} = 3.208$ are indicated by green and orange vertical lines, respectively.  The right panel of Figure~\ref{HistTwo} shows a magnified region of the left panel, showing that the orange line is to the left of the green line, so the   empirical estimate of the stochastic mean  is {\it greater} than the average along the entire stable period-2 orbit of the average parameter value. Note that this is opposite of the situation depicted in Figure~\ref{HistOne}.  The numerical results, along with other simulated histograms  of $(P^*)^{10000}(\mu)$ (not shown) when  $ \lambda $ is in the period two region, demonstrate that the expected value of $\pi(T(\vec{\lambda},X))$ is {\bf greater} than  the mean along the stable period-2 orbit of the deterministic map evaluated at $\bar{\lambda}$.  In other words, the relationship between the stochastic and deterministic means in the period two region is the reverse of the relationship in the period one region.  We prove that this is the case in Theorem \ref{Period2Theorem}.

We can gain some intuition about this reversal from the following observation, also depicted in Figure~\ref{HistTwo}.  
For $\lambda \in [3.208 - \Delta \lambda, 3.208 + \Delta \lambda]$, the map $S_{\lambda}$ has a stable period 2 orbit:
$\{ p(\lambda), q(\lambda) \}$ where $p(\lambda) <  q(\lambda)$ are given by Equations \eqref{peq} and \eqref{qeq}, respectively. 
Thus, almost all orbits of the stochastic system will eventually enter  two disjoint intervals, $I_p$ and $I_q$, where $I_q$ is the image of the interval: $ [p(3.208 + \Delta \lambda),p(3.208 - \Delta \lambda)]$ and $I_p$ is the image of the interval $[q(3.208 - \Delta \lambda),q(3.208 + \Delta \lambda)]$.  A generic orbit will alternately visit $I_p$ and $I_q$ and thus the invariant measure, $\mu^*$, can be written as a sum of two measures, $\mu_p^*$ and $\mu_q^*$, supported on $I_p$ and $I_q$, respectively, and $E_{\mu^*}[X]$ will be the average of $E_{\mu_p^*}[X]$ and $E_{\mu_q^*}[X]$.  Figure \ref{HistTwo} compares $E_{\mu_p^*}[X]$ and $E_{\mu_q^*}[X]$ with the stable period-2 points of the deterministic system with the average parameter value, $\bar{\lambda} = 3.208$.  We see that $E_{\mu_p^*}[X] > p(\bar{\lambda})$ and $E_{\mu_q^*}[X] < q(\bar{\lambda})$, but we formally prove in the Appendix that 
$$\left[{ E_{\mu_p^*}[X] - p(\bar{\lambda}) }\right]
>  \left[{ q(\bar{\lambda}) - E_{\mu_q^*}[X] }\right] , $$
so that $E_{\mu^*}[X] > \frac{p(\bar{\lambda}) + q(\bar{\lambda})}{2} $.  This is the conclusion of Theorem \ref{Period2Theorem}.

\begin{figure} [H]
\begin{center}
\includegraphics[width = .47\textwidth]{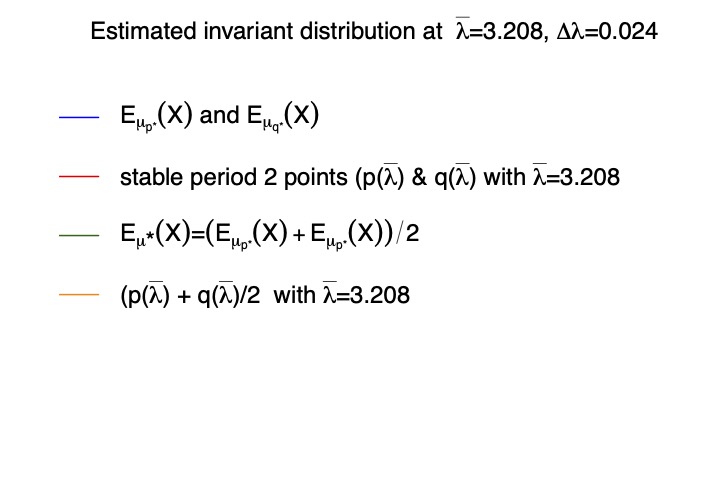} \quad
\includegraphics[width = .47\textwidth]{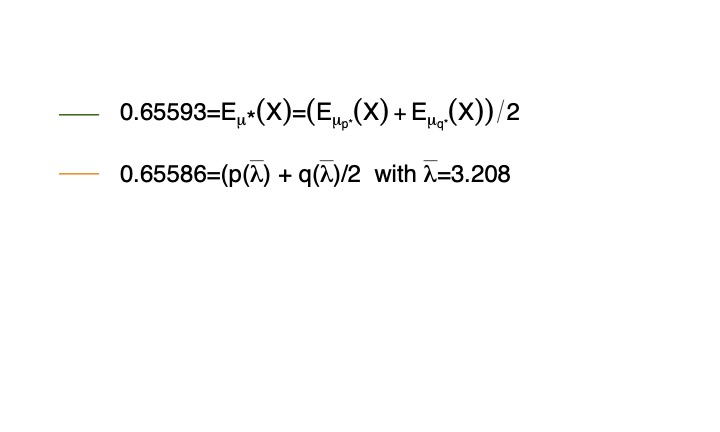}\\
\includegraphics[width = .47\textwidth]{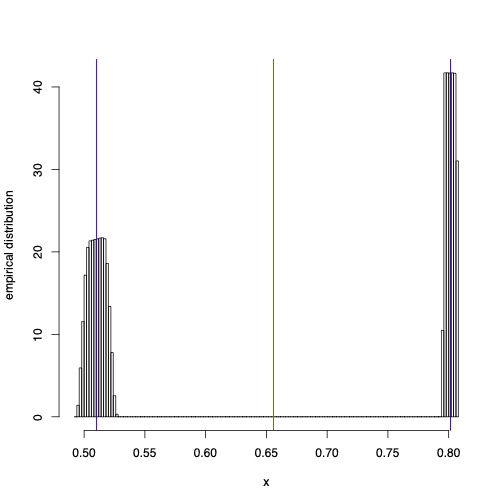}
\quad
\includegraphics[width = .47\textwidth]{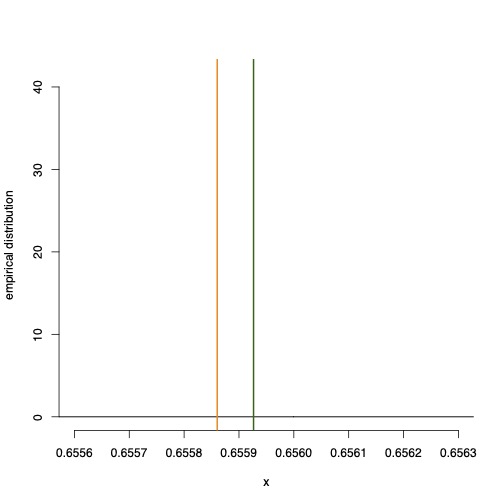}
\caption{ Numerically estimated histogram of the $10000^{th}$ iterate of $P^*$, the Foias operator associated with $T(\vec{\lambda},x)$.  In this simulation,  the parameter value at each iteration is given by $ \lambda \sim {\mathcal U}[3.208 - \Delta \lambda, 3.208 + \Delta \lambda]$ with $\Delta \lambda = 0.024$; 20,000 initial $x$-values were drawn from a uniform distribution on $[0,1]$.   Since the system has a unique stable invariant measure, this histogram is an estimate of the distribution of the limiting invariant measure, $\mu^*$.  The right panel shows a magnification of the central portion of the bimodal histogram, highlighting that the mean of the empirical histogram (green line, $(E_{\mu_p^*}[X] + E_{\mu_q^*}[X])/2$) is greater than the average of the stable fixed points of the invariant distribution at the average parameter value, $\bar{\lambda} = 3.208$ (orange line, $(p(\bar{\lambda}) + q(\bar{\lambda}))/2$). }
\label{HistTwo}
\end{center}
\end{figure}

The previous motivation leads to the following theorem.  Rather than a convexity argument similar to the one used to prove Theorem \ref{PeriodOneTheorem}, the proof presented here relies on several rather technical lemmas which are proved in the Appendix. 
We also restrict ourselves to the case where $g(\lambda)$ is  the uniform density on an interval.

Suppose $\lambda \sim {\mathcal U}[\bar{\lambda} - \Delta \lambda, \bar{\lambda} + \Delta \lambda]$ where $(\bar{\lambda} - \Delta \lambda, \bar{\lambda} + \Delta \lambda) \subset  (3, 1 + \sqrt{6})$.  Note that this means that $S_{\lambda}$ has a stable period-2 orbit for all values of $\lambda$.  We also require $\Delta \lambda$ to be small enough so that the invariant measure is supported on two disjoint intervals.  See below and Lemma \ref{LambdaClaim} for details.

\begin{thm}
\label{Period2Theorem}
 In the period two region, with $\lambda$ uniformly distributed as described above, the expected value of $X$ under $\mu^*$, the stochastic logistic map (i.e., $\Delta \lambda > 0$), is {\bf greater} than  the average of the stable period-2 points of the deterministic map with the average parameter value,  $\bar{\lambda}$. \label{PeriodTwoTheorem}
 $$E_{\mu^*}[X] > \frac{p(\overline{\lambda})+ q(\overline{\lambda})}{2} = \frac{\overline{\lambda} + 1}{2 \overline{\lambda}}.$$

\end{thm}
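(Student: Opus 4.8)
The plan is to reduce the theorem to a comparison of two conditional means, and then to a comparison of two conditional \emph{variances}. Following the heuristic already sketched in the text, I would first use the period-two structure together with Proposition~\ref{thm:stable}: since every $S_\lambda$ with $\lambda$ in the support sends $I_p$ into $I_q$ and $I_q$ into $I_p$, the operator $P^*$ interchanges the masses that $\mu^*$ assigns to the two intervals, so invariance forces each mass to equal $1/2$ and $\mu^* = \tfrac12(\nu_p + \nu_q)$ with $\nu_p,\nu_q$ probability measures supported on $I_p,I_q$. Writing $m_p = E_{\nu_p}(X)$, $m_q = E_{\nu_q}(X)$, $s_p = E_{\nu_p}(X^2)$, $s_q = E_{\nu_q}(X^2)$, the claim $E_{\mu^*}(X) > \tfrac{\bar\lambda+1}{2\bar\lambda}$ becomes simply $m_p + m_q > p(\bar\lambda) + q(\bar\lambda)$.

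Next I would record the exact one-step moment identities coming from the skew product and the independence of $\lambda$ and $X$: pushing $\nu_p$ forward gives $m_q = \bar\lambda(m_p - s_p)$ and $s_q = E[\lambda^2]\,E_{\nu_p}[X^2(1-X)^2]$, and symmetrically with the roles of $p$ and $q$ interchanged. The crucial feature, absent in the deterministic case, is that $E[\lambda^2] = \bar\lambda^2 + \mathrm{Var}(\lambda) > \bar\lambda^2$ whenever $\Delta\lambda > 0$. Subtracting the deterministic relations $q(\bar\lambda) = \bar\lambda p(\bar\lambda)(1-p(\bar\lambda))$ and $p(\bar\lambda) = \bar\lambda q(\bar\lambda)(1-q(\bar\lambda))$, and setting $\delta_p = m_p - p(\bar\lambda)$, $\delta_q = m_q - q(\bar\lambda)$, $v_p = \mathrm{Var}_{\nu_p}(X)$, $v_q = \mathrm{Var}_{\nu_q}(X)$, these identities become the exact pair
\[ \delta_q = f'(p)\,\delta_p - \bar\lambda\,\delta_p^2 - \bar\lambda v_p, \qquad \delta_p = f'(q)\,\delta_q - \bar\lambda\,\delta_q^2 - \bar\lambda v_q, \]
where $f(x) = \bar\lambda x(1-x)$. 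Here I would exploit the clean algebraic identities $f'(p)+f'(q) = -2$ and $f'(p)f'(q) = -\bar\lambda^2 + 2\bar\lambda + 4 \in (-1,1)$, valid for $\bar\lambda \in (3,1+\sqrt{6})$, which also yield $|f'(p)| < 1 < |f'(q)|$ throughout the regime.

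Solving the linear part of this pair for $\delta_p + \delta_q$ shows that its sign is, to leading order, the sign of $v_p - v_q$ times the positive factor $\bar\lambda/\sqrt{1 - f'(p)f'(q)}$; thus the theorem reduces to the single inequality $\mathrm{Var}_{\nu_p}(X) > \mathrm{Var}_{\nu_q}(X)$, i.e.\ the invariant measure is more spread out on the lower interval than on the upper one. To establish this I would use the second-moment identities to obtain a coupled pair of the form $v_q \approx f'(p)^2 v_p + \tau^2 c_p$ and $v_p \approx f'(q)^2 v_q + \tau^2 c_q$ with $c_p,c_q > 0$ and $\tau^2 = \mathrm{Var}(\lambda)$, and then solve: because the map contracts near $p$ (as $|f'(p)| < 1$) and expands near $q$ (as $|f'(q)| > 1$), while the noise injects a strictly positive amount of variance at each step, the solution forces $v_p - v_q > 0$. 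The two one-sided facts $m_p > p(\bar\lambda)$ and $m_q < q(\bar\lambda)$ would fall out of the same gap equations combined with the stability of the period-two orbit of the two-step map $f\circ f$, whose multiplier is $f'(p)f'(q) \in (-1,1)$.

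The main obstacle is converting this linearized reasoning into rigorous inequalities, which is precisely what makes the Appendix technical. The quadratic correction terms $\bar\lambda\delta_p^2$ and $\bar\lambda\delta_q^2$, and the replacement of $X(1-X)$ by its linearization in the variance recursion, must be \emph{bounded} rather than discarded; and the slope and sign conditions ($|f'(p)| < 1 < |f'(q)|$, the position of $p$ and $q$ relative to $1/2$, and $f'(p)f'(q) \in (-1,1)$) must be verified uniformly over the entire support $[a,b]$, not merely at $\bar\lambda$. I expect the cleanest rigorous route is to prove the variance comparison $v_p > v_q$ directly as an exact inequality and then feed it, with explicit control of the second-order terms, back into the gap equations to conclude $\delta_p + \delta_q > 0$.
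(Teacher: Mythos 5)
Your proposal is correct in outline, and it takes a genuinely different route from the paper's Appendix, although both are small-noise perturbation arguments resting on the same two foundations: the disjoint swapped supports $I_p, I_q$ with mass $\tfrac12$ each (the paper's Lemma~\ref{LambdaClaim} and the remark following it --- the paper proves this carefully with explicit interval endpoints, whereas you assume it), and the exact one-step identity $E_{\mu_q^*}[X] = \bar\lambda\bigl(E_{\mu_p^*}[X] - E_{\mu_p^*}[X^2]\bigr)$ (the paper's Lemma~\ref{Lemma1}, which you use in both directions). From there the paths diverge. The paper differentiates in $\Delta\lambda$: it proves $E_{\mu_p^*}[X] > p(\bar\lambda)$ by a Jensen argument on the second iterate (Lemma~\ref{Lemma3}, via the shifted function $H = F + \bar\lambda\epsilon$ and convexity of $h$ on $I_p$), shows the upper peak's variance has vanishing right-derivative at $\Delta\lambda = 0$ (Lemma~\ref{Lemma2}, by cases on the position of $\bar\lambda$ relative to $1+\sqrt5$), and feeds these into a derivative comparison (Lemma~\ref{Lemma4}); notably, only the upper-peak variance $v_q$ ever needs to be controlled. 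You instead write the gap equations in both directions and exploit the algebra of the period-2 multipliers: your identities all check out, since $f'(q) = -\bigl(1+\sqrt{(\bar\lambda-3)(\bar\lambda+1)}\bigr)$ gives $|f'(q)|>1$ exactly, and $f'(p)+f'(q)=-2$ forces $1 - f'(p)f'(q) = \bigl(1+f'(p)\bigr)^2$, which confirms your leading-order formula $\delta_p + \delta_q = \bar\lambda\,(v_p - v_q)\big/\sqrt{1-f'(p)f'(q)}$. This cleanly reduces the theorem to the variance comparison $v_p > v_q$, which your recursions $v_q \approx f'(p)^2 v_p + \tau^2 c_p$, $v_p \approx f'(q)^2 v_q + \tau^2 c_q$ deliver from the slope asymmetry $|f'(p)| < 1 < |f'(q)|$; the same linear system also yields $\delta_p>0$ and $\delta_q<0$, recovering the paper's one-sided facts. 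What each buys: the paper's Jensen lemma is non-perturbative (valid whenever the supports are disjoint) and sidesteps $v_p$ entirely, but the mechanism stays opaque; your reduction exposes it --- noise-injected variance is amplified through the steep branch at $q$ and damped through the flat branch at $p$, and the mean shift is proportional to $v_p - v_q$ --- at the cost of controlling both variances plus all linearization errors (the $\bar\lambda\delta^2$ terms and the third/fourth moments hidden in linearizing the variance recursion), a technical burden comparable to the paper's Lemma~\ref{Lemma2}. Be aware that, exactly like the paper's proof, your route establishes the inequality only for sufficiently small $\Delta\lambda$, not for every $\Delta\lambda>0$ admissible in the statement.
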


Suppose  $\Delta \lambda$ is small enough so that the support of the invariant distribution, $\mu^*$, consists of two disjoint intervals (see Lemma \ref{LambdaClaim} in the Appendix).  Then define the measures $\mu_p^*$ and $\mu_q^*$ as follows:
\begin{equation} \label{inv_measures}  
\mu_p^*(A) = 2 \mu^*\Bigg(A \cap \bigg[0, \frac{\bar{\lambda}-1}{\bar{\lambda}} \bigg] \Bigg) \quad \quad
\mu_q^*(A) = 2 \mu^*\Bigg(A \cap \bigg(\frac{\bar{\lambda}-1}{\bar{\lambda}}, 1 \bigg] \Bigg) 
\end{equation}
Thus, $\mu_p^*$ gives the left peak of the invariant distribution, and $\mu_q^*$ gives the right peak.  

 The proof of Theorem \ref{Period2Theorem} involves showing  that, for small $\Delta \lambda > 0$,  the expected value of the lower peak, $E_{\mu_p^*}[X]$, is farther from $p(\bar{\lambda})$ than the distance of the expected value of the upper peak, $E_{\mu_q^*}[X]$, to $q(\bar{\lambda})$.  The entire proof is in the Appendix.

\subsection{Period Four}

Figure~\ref{HistFour} shows the average of the four stable fixed points for $ \bar{\lambda} = 3.508 $, corresponding to the four points on the vertical slice of the bifurcation diagram (see Figure~\ref{bifurc}) of $S_\lambda(x)$ with $ \lambda = 3.508$ (orange line).  In the same figure, we plot the average of the points that make up the empirical histogram (green line). Observe in the right panel of  Figure~\ref{HistFour}, the zoomed in version of the left panel, the  $x$-coordinate of the green line is less than that of the orange line, so the average of the empirical estimation of $\mu^*$ is strictly less than the average of the stable period-four points, just as in the period one region. Figure~\ref{HistFour}, and other histograms of the last four iterations (not shown) of $T(\vec{\lambda},x)$ for $ \lambda $ in the period four region, demonstrate that the expected value of $\mu^*$ under the stochastic logistic map is {\bf less} than or equal to the long-term average under the corresponding deterministic map.  Thus the behavior in the period four region is the same as in the period one region (and has flipped from the behavior in the period two region).

\begin{figure} [H]
\begin{center}
\includegraphics[width = .47\textwidth]{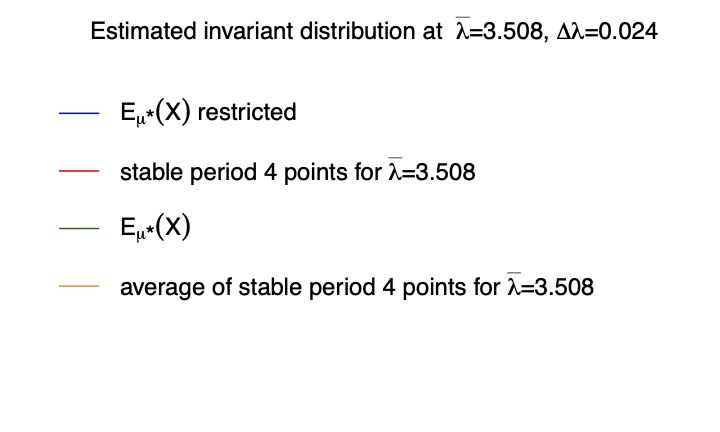} \quad
\includegraphics[width = .47\textwidth]{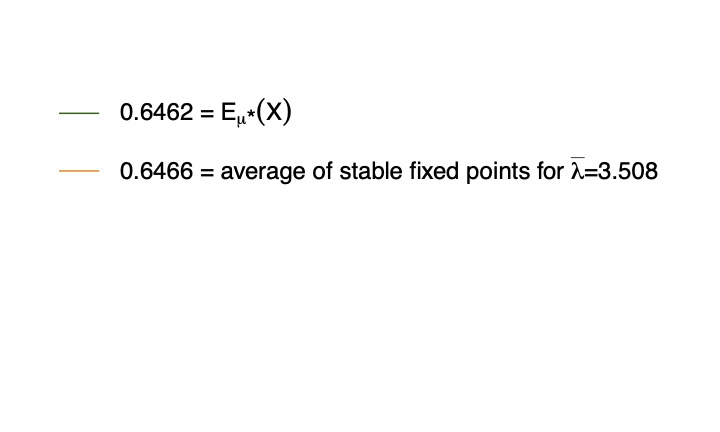}\\
\includegraphics[width = .47\textwidth]{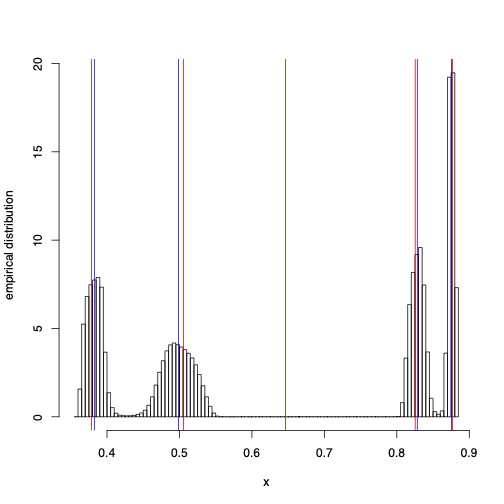} \quad
\includegraphics[width = .47\textwidth]{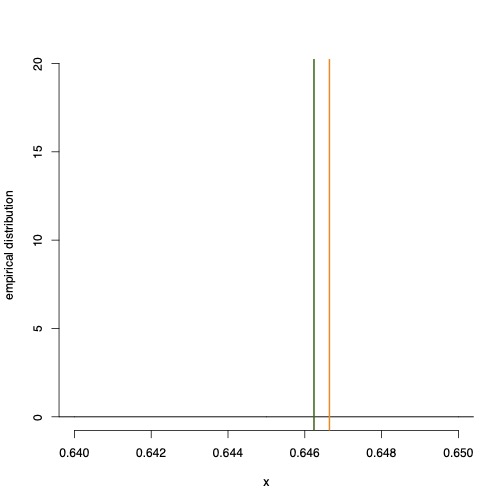}
\caption{Numerically estimated histogram of the $10000^{th}$ iterate of $P^*$, the Foias operator associated with $T(\vec{\lambda},x)$.  In this simulation,  the parameter value at each iteration is given by $ \lambda \sim {\mathcal U}[3.508 - \Delta \lambda, 3.508 + \Delta \lambda]$ with $\Delta \lambda = 0.024$; 20,000 initial $x$-values were drawn from a uniform distribution on $[0,1]$.    Since the system has a unique stable invariant measure, this histogram is an estimate of the distribution of the limiting invariant measure, $\mu^*$.  The right panel shows a magnification of the central portion of the histogram, highlighting that the mean of the empirical histogram (green line) is less than the average of the stable fixed points of the invariant distribution at the average parameter value, $\bar{\lambda} = 3.508$ (orange line). }
\label{HistFour}
\end{center}
\end{figure}

\section{Discussion}

\label{sec:discussion}

Previous studies have explored the  effect of randomness on the average behavior of  families of one-dimensional maps with stable fixed points \cite{Haskell2005, Hinow2016}.
In this paper we compared the expected value of the deterministic and stochastic logistic map in different periodic regimes. We showed that a unique stable invariant measure existed under the Foias operator associated with the stochastic logistic map in the parameter regions of interest.  This allowed us to interpret numerical experiments, and allowed us to compare the expected value of the stochastic logistic map with that of the deterministic map.

In the period one region, the average of the deterministic map, which is the fixed point of the map at the average parameter values, is greater than the expected value of $X$ under the unique stable invariant distribution of the stochastic map.  Since this distribution is stable, the long-term average of almost all orbits under $T(\vec{\lambda},x)$ is strictly less than the long-term average under the related deterministic map.  In terms of a growth model, if an environmental parameter that affects the birth rate of the population fluctuates randomly around a mean value $\bar{\lambda}$, then the average population will be smaller than in the deterministic model with parameter fixed at $\bar{\lambda}$.

These results for the period one region are similar to those in \cite{Haskell2005}, which apply to the stochastic Beverton-Holt model.   This is not surprising, since the proof of Theorem \ref{PeriodOneTheorem} uses only the fact that the logistic map is unimodal and concave down, features shared by the Beverton-Holt model.  In \cite{Hinow2016} a similar result is proved in greater generality, allowing for arbitrary distributions of the parameter.

In contrast to the period one region, in the period two region the average of the deterministic map is {\bf less} than the expected value of $X$ under the unique stable invariant distribution of the stochastic map.   The period two result is given in Theorem \ref{PeriodTwoTheorem} and formally proved in the Appendix.  The switch in behavior led us to explore what happens after further period doubling bifurcations.

Our empirical simulations (shown in Figure \ref{HistFour}) demonstrate that the behavior in the period four region reverts to that of the period one region.  In particular, it seems as though the average of the deterministic map is again greater than the expected value of $X$ under the unique stable invariant distribution of the stochastic map.  The oscillating behavior across the different regions leads us to the following ``flip-flop" conjecture.

\begin{conj} Flip-flop conjecture.

\begin{enumerate}
\item
For $\rho$ {\bf odd}, consider regions with period $2^\rho$.  Let $p_i(\bar{\lambda})$ be the stable period-$2^\rho$ points of the deterministic map with the average parameter value, $\bar{\lambda}$.
 
The expected value of $X$ under the stochastic logistic map is {\bf greater} than the average of the stable period-$2^\rho$ points of the deterministic map with the average parameter value,  $\bar{\lambda}$. 
 $$E_{\mu^*}[X] > \frac{1}{2^\rho} \sum_{i=1}^{2^\rho}p_i(\bar{\lambda}) $$

\item
For $\rho$ {\bf even}, consider regions with period $2^\rho$.  Let $p_i(\bar{\lambda})$ be the stable period-$2^\rho$ points of the deterministic map with the average parameter value, $\bar{\lambda}$.
 
The expected value of X under the stochastic logistic map is {\bf less} than the average of the stable period-$2^\rho$ points of the deterministic map with the average parameter value,  $\bar{\lambda}$. 
 $$E_{\mu^*}[X] < \frac{1}{2^\rho} \sum_{i=1}^{2^\rho}p_i(\bar{\lambda}) $$
\end{enumerate}

\end{conj}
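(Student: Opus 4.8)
The plan is to treat the conjecture as the natural generalization of Theorems~\ref{PeriodOneTheorem} and~\ref{PeriodTwoTheorem}, which are the cases $\rho = 0$ (period $2^0 = 1$, giving ``$<$'') and $\rho = 1$ (period two, giving ``$>$''), and to push the peak-decomposition argument of the period-two proof through to $n = 2^\rho$ peaks. First I would establish the structural analogue of Lemma~\ref{LambdaClaim}: for $\bar\lambda$ in the period-$n$ regime and $\Delta\lambda$ small enough, the support of $\mu^*$ splits into $n$ pairwise disjoint intervals $I_1,\dots,I_n$, each containing one deterministic cycle point $p_i(\bar\lambda)$, with the random map sending $I_i$ into $I_{i+1}$ for every $\lambda$ in the support (indices read modulo $n$). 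Writing $\mu^* = \frac1n\sum_{i=1}^n \mu_i^*$ for the induced peak measures (the $n$-peak version of \eqref{inv_measures}), invariance of $\mu^*$ forces the random map to cyclically permute them, so that $P^*\mu_i^* = \mu_{i+1}^*$. With $m_i = E_{\mu_i^*}(X)$, the definition \eqref{eq:PF} of $P^*$ and the independence of $\lambda$ and $X$ then give the exact relations
\[ m_{i+1} = \bar\lambda\left( m_i - E_{\mu_i^*}(X^2) \right), \qquad i = 1,\dots,n. \]

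Summing these relations around the cycle, the first-moment terms cancel because $\sum_i m_{i+1} = \sum_i m_i$, exactly as in the proofs of Theorems~\ref{PeriodOneTheorem} and~\ref{PeriodTwoTheorem}. Using the matching deterministic identities $p_{i+1}(\bar\lambda) = \bar\lambda\, p_i(\bar\lambda)(1 - p_i(\bar\lambda))$ and $\bar\lambda > 1$, this expresses the quantity of interest as
\[ \frac1n\sum_{i=1}^n m_i - \frac1n\sum_{i=1}^n p_i(\bar\lambda) = \frac{\bar\lambda}{n(\bar\lambda - 1)}\left[ \sum_{i=1}^n\left( m_i^2 - p_i(\bar\lambda)^2 \right) + \sum_{i=1}^n v_i \right], \]
where $v_i = \mathrm{Var}_{\mu_i^*}(X)$ and the prefactor is positive. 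Thus the conjecture is equivalent to the bracketed expression being positive when $\rho$ is odd and negative when $\rho$ is even; intuitively, the question is whether the injected variance $\sum_i v_i$ outweighs, or is outweighed by, the second-order shift $\sum_i\bigl(p_i(\bar\lambda)^2 - m_i^2\bigr)$ of the peak means.

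To evaluate the bracket to leading order I would linearize the random map about the cycle: a deviation near $p_i(\bar\lambda)$ is carried to a deviation near $p_{i+1}(\bar\lambda)$ by the local multiplier $s_i = S'_{\bar\lambda}(p_i) = \bar\lambda(1 - 2p_i)$ together with the injection $p_i(1-p_i)(\lambda - \bar\lambda)$. Composing around the cycle produces an AR(1)-type recursion on each peak whose contraction factor is the cycle multiplier $M = \prod_{i=1}^n s_i$, which satisfies $|M| < 1$ by stability; its stationary solution yields each $v_i$ and each shift $\delta_i = m_i - p_i(\bar\lambda)$ explicitly at order $(\Delta\lambda)^2$ as symmetric expressions in the $s_j$ and the injections. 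Substituting back turns the bracket into a symmetric function of $\{s_i\}$: for $\rho = 0$ this recovers the single negative variance term behind Theorem~\ref{PeriodOneTheorem}, and for $\rho = 1$ it recovers the positive quantity, with leading factor $\sqrt{1 - M}$, that underlies Theorem~\ref{PeriodTwoTheorem}.

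The hard part is the parity. One must show that this leading-order symmetric function has sign $(-1)^{\rho+1}$ for every $\rho$, that is, that it flips at each period doubling. Since there is no closed form for the cycle points $p_i(\bar\lambda)$ when $\rho$ is large, I expect the only viable route is an induction on $\rho$ exploiting the self-similar structure of the cascade: at a period doubling the $2^{\rho+1}$ points appear in pairs straddling the now-unstable $2^\rho$ points, and the two-step return map on the new cycle is, after rescaling, conjugate to the one-step map on the old one. Tracking how the multipliers $s_i$ and the variance-injection coefficients transform under this pairing should relate the level-$(\rho+1)$ symmetric function to the level-$\rho$ one and produce the sign reversal. Making this renormalization step quantitative, while simultaneously controlling the nonlinear remainders uniformly so that the leading-order sign survives for all sufficiently small $\Delta\lambda > 0$, is the principal obstacle, and is the reason the statement appears here only as a conjecture.
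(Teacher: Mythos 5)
This statement is the paper's \emph{flip-flop conjecture}: the paper offers no proof of it, only numerical evidence in the period-four regime (Figure \ref{HistFour}) and a one-sentence renormalization heuristic at the end of Section \ref{sec:discussion}. So there is no paper proof to compare against, and your proposal must stand on its own --- and on its own it is not a proof, as you yourself concede. What you do have is correct and worth stating: the $2^\rho$-peak decomposition generalizing Lemma \ref{LambdaClaim} and Equation \eqref{inv_measures}, the cyclic action $P^*\mu_i^* = \mu_{i+1}^*$, and the exact moment identity obtained by summing $m_{i+1} = \bar{\lambda}\left({m_i - E_{\mu_i^*}(X^2)}\right)$ around the cycle against its deterministic counterpart $(\bar{\lambda}-1)\sum_i p_i(\bar{\lambda}) = \bar{\lambda}\sum_i p_i(\bar{\lambda})^2$. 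Your resulting reduction --- the conjecture holds if and only if $\sum_i\left({m_i^2 - p_i(\bar{\lambda})^2}\right) + \sum_i v_i$ has sign $(-1)^{\rho+1}$ --- is algebraically sound, is consistent with the two proven cases (for period one it forces $m < p$ exactly as in Theorem \ref{PeriodOneTheorem}), and is a genuine sharpening of the paper's framework, which never isolates this quantity.

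The gap is everything after that reduction, and it is the entire mathematical content of the conjecture. Your plan to prove that the leading-order symmetric function in the multipliers $s_i$ flips sign at each period doubling is precisely the renormalization heuristic the paper gestures at (the convexity of $S_\lambda^{2^k}$ at the fixed point alternating with $k$) and, like the paper, you do not carry it out; asserting that tracking multipliers through the doubling ``should'' produce the sign reversal is a restatement of the conjecture, not an argument. Moreover, the intermediate steps you treat as routine are themselves nontrivial: the expansions $m_i = p_i(\bar{\lambda}) + O((\Delta\lambda)^2)$ and $v_i = O((\Delta\lambda)^2)$ require smoothness of the peak moments in $\Delta\lambda$ at $0$, which is exactly the kind of statement the paper needs its most delicate machinery to obtain just for $n=2$ (Lemma \ref{Lemma2} on the one-sided derivative $V_+'(0)$, and the continuity argument inside Lemma \ref{Lemma4}). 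Nothing in your sketch supplies this for general $2^\rho$, where the cycle multiplier $M$ tends to $\pm 1$ at the ends of each periodic window and any uniform control degenerates. Your proposal is therefore best read as a correct reformulation plus a research program --- which is consistent with the statement remaining, in the paper and here, a conjecture.
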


We note that a renormalization framework can be used to explore the conjecture, since the convexity of the iterates: $S_\lambda(x)$, $S^2_\lambda(x)$, $S^4_\lambda(x), \dots$ at the fixed point switches as the number of iterates is doubled. Thus, it makes sense to conjecture that as we increase the period, the direction of the inequality switches back and forth, at least through the period doubling range of $\lambda $.

\begin{appendix}

\section{Details of the proof of Theorem \ref{PeriodTwoTheorem}}
\label{sec:appendix}

This Appendix contains the formal proof of the main result of the paper, Theorem \ref{PeriodTwoTheorem}.  The proof consists of a series of lemmas.  To guide the reader and to highlight the steps in the argument, we first provide an outline of the lemmas.\\

Recall Theorem \ref{PeriodTwoTheorem}:\\
 In the period two region, $\lambda \sim {\cal U}(a,b)$, with $[a,b] \subset (3, 1 + \sqrt{6}) $, the expected value of $X$ under $\mu^*$ (the stochastic logistic map) is {\bf greater} than the average of the stable period-2 points of the deterministic map with the average parameter value,  $\bar{\lambda}$. 
 $$E_{\mu^*}[X] > \frac{p(\overline{\lambda})+ q(\overline{\lambda})}{2} = \frac{\overline{\lambda} + 1}{2 \overline{\lambda}}$$

\begin{enumerate}
\item
First, we establish that, for small amplitudes of noise, the support of the invariant measure is contained in two disjoint intervals ({\bf Lemma \ref{LambdaClaim}}).

\item The expected value of $X$ with respect to the right hand measure (as defined in Equation (\ref{PeriodOneTheorem})) is the same as the expected value of the first iterate of the deterministic map at $\overline{\lambda}$ with respect to the left hand measure ({\bf Lemma \ref{Lemma1}}).\\

$$ E_{\mu_q^*}[X] = E_{\mu_p^*}[S_{\bar{\lambda}}(X)] $$

\item The left peak of the invariant measure, i.e., the expected value with respect to $\mu_p^*$, is larger than  the smaller point on the period-2 cycle of $S_{\bar{\lambda}}$ ({\bf Lemma \ref{Lemma3}}).
$$E_{\mu_p^*}[X] > p(\bar{\lambda}) $$

To prove Lemma \ref{Lemma3}: create functions $H$ and $F$, apply Lemma \ref{Lemma1} twice, then Lemma \ref{LambdaClaim} and use Jensen's inequality.

\item We define a function of $\Delta \lambda$ giving the variance of $X$ with respect to the measure $\mu_q^*$.  We show that the right hand derivative of the function $V(\Delta \lambda)$ is zero when $\Delta \lambda = 0$  ({\bf Lemma \ref{Lemma2}}).\\
$$V_{+}'(0)  =  \lim_{h \rightarrow 0^+} \frac{V(h + 0 )- V(0) }{h}  = 0 $$
We use this result to prove the next lemma.
\item  The change in the expected value of the left peak of the invariant distribution is greater in magnitude than the change in the expected value of the right peak at $\Delta \lambda = 0$ ({\bf Lemma \ref{Lemma4}}).
$$ \frac{dE_{\mu_p^*}[X]}{d \Delta \lambda} \ge - \frac{d E_{\mu_q^*} [X]}{d \Delta \lambda} $$

\item Which allows us to prove Theorem  \ref{PeriodTwoTheorem}.  By Lemma \ref{Lemma4}, for very small $\Delta \lambda \ge 0$, $E_{\mu_p^*}[X]$ increases faster than $E_{\mu_q^*}[X]$ decreases in $\Delta \lambda$, implying that their mean, $E[X]$, increases with $\Delta \lambda$.

\end{enumerate}
\paragraph*{Notation} We provide some definitions of notation that will be used throughout the proof.

\begin{eqnarray}
 \mbox{For }\lambda \sim {\mathcal U}[\bar{\lambda} - \Delta\lambda, \bar{\lambda} + \Delta\lambda], \ \ \ \ g(\lambda) = \begin{cases} \frac{1}{\bar{\lambda} + \Delta\lambda - (\bar{\lambda} - \Delta\lambda)} = \frac{1}{2\Delta\lambda }& \bar{\lambda} - \Delta\lambda \le \lambda \le \bar{\lambda} + \Delta\lambda  \\ 0 & \hbox{otherwise} \end{cases} . \label{eq:glambda}
\end{eqnarray}

For values of $\lambda$ greater than $3$, the logistic map has two unstable fixed points at 0 and $x_{\lambda}^* = 1 - \frac{1}{\lambda}$, and two period-2 points, $p_\lambda < q_\lambda$  at
\begin{equation} \label{eq:per2pts}  p(\lambda) =  \frac{\lambda + 1 - \sqrt{\lambda^2 - 2 \lambda - 3}}{2 \lambda}, 
\quad \quad q(\lambda) =  \frac{\lambda + 1 + \sqrt{\lambda^2 - 2 \lambda - 3}}{2 \lambda} .
\end{equation}
Note that the fixed points are solutions of the quadratic:
$$ \lambda x (1- x) - x = 0$$
while the period-2 points are solutions to the quartic:
$$ \lambda^2 x(1-x)(1 - \lambda x (1-x))  - x = 0 $$
that are {\it not} fixed points.

\begin{eqnarray}
p_{\pm} = p(\bar{\lambda} \pm \Delta \lambda) \quad q_{\pm} = q(\bar{\lambda} \pm \Delta \lambda) \label{eq:pqplusminus}
\end{eqnarray}

The period-two points for $\bar{\lambda} = 3.2$, $\Delta \lambda = 0.1$, are shown in Figure \ref{fig:twoiter}.  Note that $p_+ < p_-$ while $q_- < q _+$.  

\subsection{Proof of Theorem \ref{PeriodTwoTheorem}}
The proof of Theorem \ref{PeriodTwoTheorem} is divided up into several steps.  First, we establish that, for small amplitudes of noise, the support of the invariant measure is contained in two disjoint intervals.  This was discussed in  Section \ref{sec:period2}.  

Suppose the values of $\lambda$ are in some interval $[\bar{\lambda} - \Delta \lambda, \bar{\lambda} + \Delta \lambda]$.  Let $I_p$ be the image under $T(\vec{\lambda}, \cdot)$ of the interval of period-2 points, $[q(\bar{\lambda} - \Delta \lambda), q(\bar{\lambda} + \Delta \lambda)]$, and let $I_q$ be the image of the other interval of period-2 points: $[p(\bar{\lambda} + \Delta \lambda), p(\bar{\lambda} - \Delta \lambda)]$.  We will see that $I_p \cap I_q = \emptyset$.  
\begin{lemma}
\label{LambdaClaim}
For $\bar{\lambda} \in (3, 1 + \sqrt{6})$ with $\Delta \lambda$ such that $[\bar{\lambda} - \Delta \lambda, \bar{\lambda} + \Delta \lambda] \subset (3, 1 + \sqrt{6})$,  the support of the invariant distribution, $I$, is contained in the disjoint union $I_p \cup I_q$.
\end{lemma}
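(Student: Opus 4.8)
\emph{Strategy.} The plan is to organize everything around the repelling fixed point $\tfrac{\bar\lambda-1}{\bar\lambda}$ --- already the splitting point in \eqref{inv_measures} --- and to use the stability of the period-two cycle to confine the support to one interval on each side of it. I would first record the monotonicity and stability facts that do the work. On $(3,1+\sqrt6)$ one has the ordering $p(\lambda)<\tfrac{\lambda-1}{\lambda}<q(\lambda)$, with $p(\lambda)$ decreasing and $q(\lambda),\tfrac{\lambda-1}{\lambda}$ increasing in $\lambda$; this reproduces $p_+<p_-$ and $q_-<q_+$ from \eqref{eq:pqplusminus} and places the entire period-two band strictly off the fixed point. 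I would also record the cycle multiplier $\lambda^2\bigl(1-2p(\lambda)\bigr)\bigl(1-2q(\lambda)\bigr)=-\lambda^2+2\lambda+4$, whose modulus is strictly below $1$ on $(3,1+\sqrt6)$ and hence bounded away from $1$, uniformly, on the compact support $[\bar\lambda-\Delta\lambda,\bar\lambda+\Delta\lambda]$, together with the fact that $0$ and each $\tfrac{\lambda-1}{\lambda}$ are repelling ($S_\lambda'(0)=\lambda>3$ and $|S_\lambda'(\tfrac{\lambda-1}{\lambda})|=\lambda-2>1$).

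\emph{Disjointness.} To prove $I_p\cap I_q=\emptyset$ I would argue by continuity in $\Delta\lambda$. The endpoints of $I_p$ and $I_q$ are the extrema of the continuous function $(\lambda,x)\mapsto S_\lambda(x)$ over the boxes $[\bar\lambda-\Delta\lambda,\bar\lambda+\Delta\lambda]\times[q_-,q_+]$ and $[\bar\lambda-\Delta\lambda,\bar\lambda+\Delta\lambda]\times[p_+,p_-]$, so they depend continuously on $\Delta\lambda$ and, at $\Delta\lambda=0$, collapse to $p(\bar\lambda)$ and $q(\bar\lambda)$. Since $p(\bar\lambda)<\tfrac{\bar\lambda-1}{\bar\lambda}<q(\bar\lambda)$ strictly, the separation $\sup I_p<\tfrac{\bar\lambda-1}{\bar\lambda}<\inf I_q$ persists for all sufficiently small $\Delta\lambda$; this simultaneously gives disjointness and shows that $I_p$ and $I_q$ lie on opposite sides of the splitting point used in \eqref{inv_measures}.

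\emph{Containment.} For the support $I=\mathrm{supp}\,\mu^*$ I would invoke the standard fact that the support of the invariant measure of a random map is forward invariant, i.e.\ $S_\lambda(I)\subseteq I$ for every $\lambda$ in the support. Because the origin is uniformly repelling, $I$ is bounded away from $0$; combined with the separation above, $I$ splits as $I=I^-\sqcup I^+$ across $\tfrac{\bar\lambda-1}{\bar\lambda}$, with $S_\lambda(I^-)\subseteq I^+$ and $S_\lambda(I^+)\subseteq I^-$. To locate $I^\pm$ I would pass to the two-step system $S_{\lambda_2}\circ S_{\lambda_1}$: by the uniform multiplier bound this is a contraction near $p(\bar\lambda)$ and near $q(\bar\lambda)$, so $I^-$ and $I^+$ are the attractors of the contractive two-step random map and are captured by forward-invariant intervals contained in $I_p$ and $I_q$ respectively, provided $\Delta\lambda$ is small enough that the $O(\Delta\lambda)$ spreading from varying the parameter is dominated by the contraction.

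\emph{Main obstacle.} The delicate point is exactly this trapping estimate. The one-step images $I_p,I_q$ of the period-two bands are \emph{not} themselves forward invariant: since $I_p$ strictly contains the period-two band $[p_+,p_-]$, one checks that $S_\lambda(I_p)$ is not contained in $I_q$. One therefore cannot simply iterate the one-step map; instead one must balance the two-step contraction rate $|-\lambda^2+2\lambda+4|$ against the parameter-induced spread to produce genuinely invariant trapping intervals and verify that they lie inside $I_p\cup I_q$ for $\Delta\lambda$ below an explicit threshold. This quantitative balance is the heart of the lemma and is where the ``$\Delta\lambda$ small enough'' hypothesis is actually used.
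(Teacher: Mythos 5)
Your proposal does not prove the lemma: the step you yourself label the ``main obstacle'' --- producing genuinely forward-invariant trapping intervals by balancing the two-step contraction rate $|-\lambda^2+2\lambda+4|$ against the $O(\Delta\lambda)$ parameter-induced spread --- \emph{is} the containment assertion of the lemma, and you leave it as an announced difficulty rather than an executed estimate. There is a second, quieter gap: you assert that the support splits as $I=I^-\sqcup I^+$ across $\tfrac{\bar\lambda-1}{\bar\lambda}$ with $S_\lambda(I^-)\subseteq I^+$ and $S_\lambda(I^+)\subseteq I^-$ before you have any control on $I$ near the fixed point; at that stage nothing excludes support points in the gap between $I_p$ and $I_q$, so the splitting is essentially the conclusion being assumed. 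Finally, your disjointness argument (continuity in $\Delta\lambda$, collapse to $p(\bar\lambda)$ and $q(\bar\lambda)$ at $\Delta\lambda=0$) only yields separation for an unquantified ``sufficiently small'' $\Delta\lambda$, whereas the paper proves disjointness for every admissible $\Delta\lambda$ via the explicit ordering \eqref{eq:pqinequal}, which pins $I_p$ and $I_q$ on opposite sides of the two unstable fixed points $\tfrac{\bar\lambda\pm\Delta\lambda-1}{\bar\lambda\pm\Delta\lambda}$.

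That said, your diagnosis of the obstacle is correct, and it cuts deeper than you may realize: it applies to the paper's own proof, which defines $I_p$ and $I_q$ as the \emph{one-step} images \eqref{eq:xminxmax} of the period-2 bands, derives containment from the heuristic that orbits ``approach a neighborhood of the set of period-2 points,'' and never checks forward invariance of $I_p\cup I_q$. That invariance fails: since the support contains the bands $[p_+,p_-]\cup[q_-,q_+]$ and is forward invariant, it contains all iterated images of the bands, and already the second images protrude from $I_p\cup I_q$. Concretely, whenever $\bar\lambda+\Delta\lambda>1+\sqrt{5}$ one has $x_{q,\max}>q_+$, hence $S_{\bar\lambda-\Delta\lambda}(x_{q,\max})<S_{\bar\lambda-\Delta\lambda}(q_+)=x_{p,\min}$, a support point strictly below $I_p$; and in the regime of Case 2 of Lemma \ref{Lemma2} (take $\bar\lambda=3.1$, $\Delta\lambda=0.05$, so $I_q=[q_-,q_+]$) one computes $S_{\bar\lambda+\Delta\lambda}\bigl(S_{\bar\lambda-\Delta\lambda}(q_+)\bigr)\approx 0.787>q_+\approx 0.784=x_{q,\max}$, a support point strictly above $I_q$. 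So the lemma holds only after enlarging $I_p$, $I_q$ to honestly invariant intervals for $\Delta\lambda$ small --- exactly the two-step contraction-versus-spread argument you outline. Carried to completion, your route (contraction trapping for containment, your continuity argument for disjointness of the enlarged intervals) would give a correct proof and repair the paper's; as written, however, it is an outline whose decisive estimate is missing.
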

\noindent

\begin{proof}
If $\lambda \in  (\bar{\lambda} - \Delta \lambda, \bar{\lambda} + \Delta \lambda)$, with 
\begin{equation} \label{eq:lambdainterval} 
3 < \bar{\lambda} - \Delta \lambda < \bar{\lambda} + \Delta \lambda < 1 + \sqrt{6}
\end{equation} 
then the dynamical system given by 
$$ x_{t + 1} =  S_\lambda(x_t) =   \lambda x_t ( 1 - x_t) $$
has an asymptotically stable period-2 orbit \cite{Devaney} that attracts almost all orbits in $[0,1]$.   In other words, for a fixed value of $\lambda$,  for almost all $x_0 \in (0,1)$ with respect to Lebesgue measure, the limit set of $\{x_t \}_{t = 0}^\infty$ is $\{ p(\lambda), q(\lambda) \}$  as given in Equation \eqref{eq:per2pts}.
 This implies that almost all orbits of the stochastic map will approach a neighborhood of the set of period-2 points: $[p_+, p_-] \cup [q_-, q_+]$.   

Let $T(\vec{\lambda},x)$ be the stochastic logistic map with $\lambda \sim {\mathcal U}[\bar{\lambda} - \Delta \lambda, \bar{\lambda} + \Delta \lambda]$, and let $\mu^*$ be the unique stable invariant measure under $T$, as defined in Section \ref{sec:ergodicity}.  Denote the support of $\mu^*$ by $I$.  We will show that $I \subset I_p \cup  I_q$, where  are given by $I_p = [x_{p, \min}, x_{p, \max}]$ and $I_q = [x_{q, \min}, x_{q, \max}]$ as seen in Figure \ref{fig:disjoint}. We give an explicit description of $I_p$ and $I_q$ below.

For a fixed $\bar{\lambda}$ and $\Delta \lambda$ satisfying Equation \eqref{eq:lambdainterval}, let $p_\pm$ and $q_\pm$ be defined as in Equation \eqref{eq:pqplusminus}.  Remember that, if a process is ergodic, the support of the unique invariant measure contains all invariant sets of positive measure.  The stable period-2 points of $S_\lambda(x)$ for all $\lambda \in  (\bar{\lambda} - \Delta \lambda, \bar{\lambda} + \Delta \lambda)$ must be contained in the support of $\mu^*$ since these points are invariant under $S_\lambda(x)$ for each $\lambda \in (\bar{\lambda} - \Delta \lambda, \bar{\lambda} + \Delta \lambda)$.  Hence $I \supset [p_+, p_-] \cup   [q_-, q_+]$.  Figure \ref{fig:twoiter}  shows that $I$ can, in fact, contain more than the set of stable period-2 points. This follows from the observation that the function $S_{\lambda}(x)$ can be non-monotonic in the interval $[p_+, p_-]$,  and the functions are decreasing on the interval $[q_-, q_+]$ for $\lambda \in (\bar{\lambda} - \Delta \lambda, \bar{\lambda} + \Delta \lambda)$, so that $S_{\bar{\lambda} + \Delta \lambda}(q_-) > S_{\bar{\lambda} - \Delta \lambda}(q_-) = p_-$, and 
$S_{\bar{\lambda} - \Delta \lambda}(q_+) < S_{\bar{\lambda} + \Delta \lambda}(q_+) = p_+$

\begin{figure}[H]
\begin{center}
\includegraphics[scale=.75]{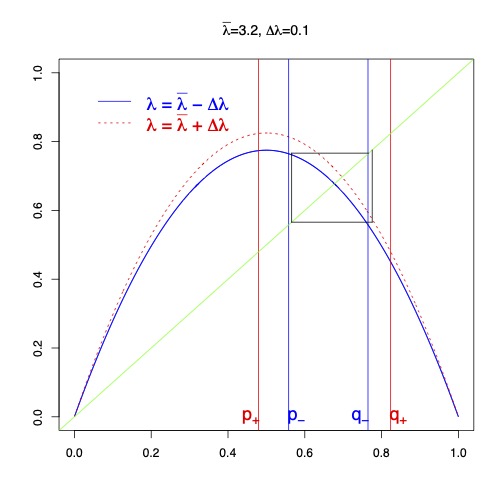}
\caption{Two iterates of the stochastic logistic map, $T(\vec{\lambda}, x)$  with initial value $x_0 = .7755$ and parameter distribution $\lambda \sim {\mathcal U}[3.1, 3.3]$, i.e., $\bar{\lambda} = 3.2, \Delta \lambda = .1$.  
}
\label{fig:twoiter}
\end{center}
\end{figure}

\begin{figure}[H]
\begin{center}
\includegraphics[width =  .9\textwidth, height =  5.5in]{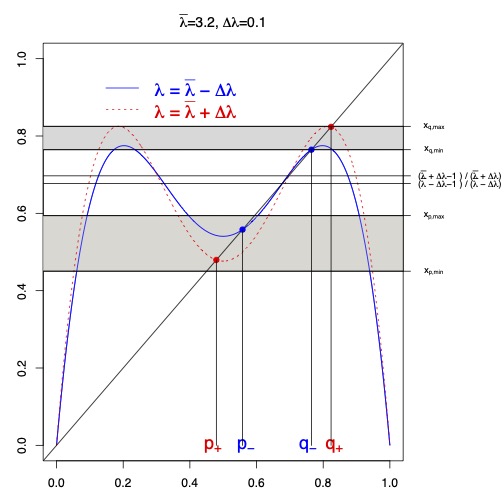}
\caption{The disjoint intervals $[x_{p,\min}, x_{p,\max}]$ and $[x_{q, \min}, x_{q, \max}]$ are the images of $[q_-, q_+]$ and $[p_+, p_-]$, respectively, under $S_{\lambda_2}(S_{\lambda_1})(x)$ for all $\lambda_{1,2} \in [\bar{\lambda} - \Delta \lambda, \bar{\lambda} + \Delta \lambda]$.
}
\label{fig:disjoint}
\end{center}
\end{figure}

For illustration, consider the case when $ 1 + \sqrt{5} \in (\bar{\lambda} - \Delta \lambda, \bar{\lambda} + \Delta \lambda)$, so that $p_+ \le \frac{1}{2} \le p_-$ and the logistic map is non-monotonic on the interval   $[p_+ , p_-]$.
Let $x_t \in  [p_+, p_-]$ and consider $x_{t+1} = T(\vec{\lambda},x_t)$.  We define $x_{q, \min}$ to be the minimum value of $x_{t+1}$, so that $x_{q,\min}$ is the smaller of $q_-$ and $S_{\bar{\lambda}-\Delta \lambda}(p_+)$, noting that it is possible that $x_{q, \min}$ can be smaller than $q_-$.   Similarly, in this case the maximum value of $x_{t+1}$ is 
$x_{q, \max} = S_{\bar{\lambda} + \Delta \lambda} \left({\frac{1}{2}}\right)= \frac{(\bar{\lambda} + \Delta \lambda)}{4} .$  

In general, define: 
\begin{eqnarray} \label{eq:xminxmax}
x_{p,\min} &=&  S_{\bar{\lambda} - \Delta \lambda}(q_+), \quad
\quad \quad \quad \quad x_{p,\max} = S_{\bar{\lambda} + \Delta \lambda}(q_-)
\\
\quad x_{q,\min} &=& \min\{q_-, S_{\bar{\lambda}-\Delta \lambda}(p_+)\},
\quad x_{q,\max} = \max \bigg\{ S_{\bar{\lambda} + \Delta \lambda} \bigg(\frac{1}{2}\bigg), S_{\bar{\lambda} + \Delta \lambda} (p_+), 
S_{\bar{\lambda} + \Delta \lambda} (p_-) \bigg\}   \nonumber 
\end{eqnarray}

Note that if $\Delta \lambda = 0$ and $\bar{\lambda} > 3$ then
\begin{eqnarray*}
x_{p, \max} &=& p(\bar{\lambda}) < \frac{\bar{\lambda} - 1}{\bar{\lambda}}\\
x_{q, \min} &=& q(\bar{\lambda}) > \frac{\bar{\lambda} - 1}{\bar{\lambda}}\\
\end{eqnarray*}
By continuity in $\Delta \lambda$, for small enough values of $\Delta \lambda$\footnote{An explicit upper bound for $\Delta \lambda$ can be calculated, as the relationships described can all be written out algebraically.  However, for our purposes, we only need the results to hold for some small enough $\Delta \lambda > 0$.  We omit the algebraic details for clarity of exposition.}, we see that the intervals $I_p$ and $I_q$ lie on either side of the non-zero fixed points corresponding to $\bar{\lambda} + \Delta \lambda$ and $\bar{\lambda} - \Delta \lambda$, as follows: 
\begin{eqnarray}
p_+ < p_- =  p(\bar{\lambda} - \Delta \lambda) \le x_{p,\max} &<& 
\frac{\bar{\lambda} - \Delta \lambda -  1}{\bar{\lambda} - \Delta \lambda} \nonumber \\
&<&  \frac{\bar{\lambda} + \Delta \lambda -  1}{\bar{\lambda} + \Delta \lambda} < x_{q, \min} \le
  q_- = q(\bar{\lambda} - \Delta \lambda) < q_+ 
 \label{eq:pqinequal}
 \end{eqnarray}
so the intervals $I_p$ and $I_q$ are indeed disjoint.  
See Figure \ref{fig:disjoint} which displays the ordered values given in Equation \eqref{eq:pqinequal}.
\end{proof}

Remark: the supports of $\mu_p^*$ and $\mu_q^*$ are contained in $I_p$ and $I_q$, respectively.  Lemma \ref{LambdaClaim} therefore implies that all points in the support of $\mu_p^*$ are mapped to the support of $\mu_q^*$ under one iterate of $T(\vec{\lambda}, x)$, so that, by the definition of the Foias operator, $P^*: \mu_p^* \mapsto \mu_q^*$ and vice-versa.  Therefore, by construction, $\mu^*(support(\mu_q^*)) = \mu^*(support(\mu_p^*)) = \frac{1}{2}$.

\begin{lemma}  \label{Lemma1}
$$ E_{\mu_q^*}[X] = E_{\mu_p^*}[S_{\bar{\lambda}}(X)] .$$
when $\lambda \sim {\mathcal U}[\bar{\lambda} - \Delta\lambda, \bar{\lambda} + \Delta\lambda]$ with density $g(\lambda)$ as given in Equation \eqref{eq:glambda}.
\end{lemma}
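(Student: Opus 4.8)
The plan is to recognize the stated identity as a direct consequence of two facts: that $\mu_q^*$ is the one-step push-forward of $\mu_p^*$ under the stochastic map, and that the logistic map $S_\lambda(x) = \lambda x(1-x)$ is \emph{linear} in the parameter $\lambda$. The Remark following Lemma \ref{LambdaClaim} already records the first fact: because the supports $I_p$ and $I_q$ are disjoint (Lemma \ref{LambdaClaim}) and one iterate of $T(\vec{\lambda},\cdot)$ carries $I_p$ into $I_q$, the Perron--Frobenius operator satisfies $P^*\mu_p^* = \mu_q^*$. If one prefers to re-derive this rather than cite the Remark, one splits the invariance relation $P^*\mu^* = \mu^* = \tfrac12\mu_p^* + \tfrac12\mu_q^*$, uses the linearity of $P^*$, and matches the pieces supported on the disjoint intervals $I_p$ and $I_q$ to conclude $P^*\mu_p^* = \mu_q^*$.

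Granting $P^*\mu_p^* = \mu_q^*$, the second step is to unwind the action of $P^*$ on an expectation. For any bounded measurable $f$ and any $\mu \in {\mathcal M}$, drawing $X \sim \mu$ and $\lambda \sim g$ independently gives $S_\lambda(X) \sim P^*\mu$, so that, consistent with the definition \eqref{eq:PF},
$$ \int_0^1 f(y)\, d(P^*\mu)(y) = \int_0^1 \int_0^4 f\big(S_\lambda(x)\big)\, g(\lambda)\, d\lambda\, d\mu(x). $$
Taking $f(y) = y$ and $\mu = \mu_p^*$, and using $P^*\mu_p^* = \mu_q^*$, yields
$$ E_{\mu_q^*}[X] = \int_0^1 \int_0^4 \lambda\, x(1-x)\, g(\lambda)\, d\lambda\, d\mu_p^*(x). $$

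The final step uses linearity in $\lambda$ to pull the parameter average through. Since $x(1-x)$ does not depend on $\lambda$, the inner integral factors as $x(1-x)\int_0^4 \lambda\, g(\lambda)\, d\lambda = \bar{\lambda}\, x(1-x) = S_{\bar{\lambda}}(x)$, where we use that $\int_0^4 \lambda\, g(\lambda)\, d\lambda = \bar{\lambda}$ is exactly the mean of the (here uniform) parameter distribution $g$ in \eqref{eq:glambda}. Substituting gives $E_{\mu_q^*}[X] = \int_0^1 S_{\bar{\lambda}}(x)\, d\mu_p^*(x) = E_{\mu_p^*}[S_{\bar{\lambda}}(X)]$, which is the claim.

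I expect the only genuinely delicate point to be the identification $P^*\mu_p^* = \mu_q^*$, since it rests on the geometry established in Lemma \ref{LambdaClaim}, namely the disjointness of $I_p$ and $I_q$ together with the fact that $T(\vec{\lambda},\cdot)$ interchanges them. Everything afterward is a short computation whose single essential ingredient is that the map is affine in $\lambda$, so that averaging the random map over the parameter reproduces the deterministic map evaluated at the mean parameter $\bar{\lambda}$.
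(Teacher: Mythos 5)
Your proposal is correct, and it rests on exactly the same key identity as the paper's proof: $P^*\mu_p^* = \mu_q^*$, which the paper records in the Remark following Lemma \ref{LambdaClaim} and which you justify the same way (disjointness of $I_p$ and $I_q$ plus the fact that one iterate of $T$ interchanges them). Where you diverge is in how the expectation is then computed. The paper works at the density level: it writes $P^*\mu_p^*$ via the kernel $g\!\left(\frac{x}{y(1-y)}\right)\frac{1}{y(1-y)}$, changes the order of integration, and explicitly evaluates the inner integral using the specific uniform form of $g$, arriving at $\bar{\lambda}\int_0^1 y(1-y)\,\mu_p^*(y)\,dy$. You instead invoke the abstract transfer identity
$$ \int_0^1 f(y)\, d(P^*\mu)(y) = \int_0^1\!\!\int_0^4 f\bigl(S_\lambda(x)\bigr)\, g(\lambda)\, d\lambda\, d\mu(x), $$
which is immediate from the paper's definition \eqref{eq:PF} of $P^*$, and then use only the linearity of $S_\lambda(x)$ in $\lambda$ together with $\int_0^4 \lambda\, g(\lambda)\, d\lambda = \bar{\lambda}$. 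This buys two things: you avoid the change-of-variables/Jacobian manipulation entirely (the paper's set $V_x$ and the density kernel are the most delicate notational points of its argument), and your proof makes visible that the lemma holds for \emph{any} parameter distribution with mean $\bar{\lambda}$, not just the uniform one — a generality that is true of the paper's statement as well but is obscured by its explicit evaluation of the uniform integral. Both proofs are sound; yours is the cleaner and more general rendering of the same idea.
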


\begin{proof}
We note that $P^* \mu_p^*(x) = \mu_q^*(x)$, where $P^*$ is the Foias operator given in Equation \eqref{eq:PF}.  Therefore, we have:
$$ E_{\mu_q^*}[X] = \int_0^1 x \mu_q^*(x) \, dx = \int_0^1 x P^* \mu_p^*(x) \, dx = \int_0^1 x \int_{V_x} \mu_p^*(y) g\left({\frac{x}{y(1-y)}}\right) \frac{1}{y(1-y)} \, dy \, dx , $$
where the set $V_x$ is $\{ (\lambda, y) : \lambda y (1 - y) = x \}$.  Changing the order of integration, we have that:
$$ E_{\mu_q^*}[X] = \int_0^1 \left[{ \int_0^1 x g\left({\frac{x}{y(1-y)}}\right)  \, dx }\right] \frac{1}{y(1-y)} \mu_p^*(y) \, dy .$$
The $\lambda$'s are uniformly distributed on $[\bar{\lambda} - \Delta\lambda, \bar{\lambda} + \Delta\lambda]$ so that $g(\lambda)$ is as given in Equation \eqref{eq:glambda}.  Using $g(\lambda)$ in the expression for $E_{\mu_q^*}[X]$  and evaluating the integral we get:
\begin{eqnarray*}
E_{\mu_q^*}[X] &=& \int_0^1 \left[{ \frac{x^2}{2} }\right\vert_{(\bar{\lambda} - \Delta\lambda) y(1-y)}^{(\bar{\lambda} + \Delta\lambda) y(1-y)} \, \frac{\mu_{p^*}(y)}{y(1-y) (2 \Delta\lambda)}\, dy \\
&=& \int_0^1 \frac{\left({(\bar{\lambda} + \Delta\lambda)^2 - (\bar{\lambda} - \Delta\lambda)^2}\right) \left({ (y(1-y))^2}\right)}{2} \,   \frac{\mu_{p^*}(y)}{y(1-y) (2\Delta\lambda )}\, dy \\
&=& \bar{\lambda} \int_0^1 y(1-y) \mu_{p^*} (y) \, dy
\end{eqnarray*}

We can rewrite $E_{\mu_q^*}[X]$ as:
$$ E_{\mu_q^*}[X] = \bar{\lambda} \left({E_{\mu_{p^*}}[X] - E_{\mu_p^*}[X^2] }\right) = \bar{\lambda} E_{\mu_{p^*}} [S_1(X)]
= E_{\mu_p^*}[S_{\bar{\lambda}}(X)]
.$$
\end{proof}


\begin{lemma}{The left peak of the invariant measure, i.e., the expected value with respect to $\mu_p^*$, is larger than  the  smaller point on the period-2 cycle of $S_{\bar{\lambda}}$.} \label{Lemma3}
$$E_{\mu_p^*}[X] > p(\bar{\lambda}). $$

Furthermore, the right peak, i.e., the expected value with respect to $\mu^*_{q}$, is less than the larger period-2 point of $S_{\bar{\lambda}}$:
$$ E_{\mu^*_q} < q(\bar{\lambda}).$$

\end{lemma} 

\begin{proof}
Note that we are assuming that $\Delta \lambda > 0$ in order to get a strict inequality.  To prove Lemma \ref{Lemma3}, we introduce a new function, $H_{\bar{\lambda}}(x)$, which helps us put bounds on the second iterate of $S_{\bar{\lambda}}(x)$.

Fix values of $\bar{\lambda}$ and $\Delta \lambda$  in the period-2 regime: $\bar{\lambda} \pm \Delta \lambda \in (3, 1 + \sqrt{6} )$. 

Consider the functions  $F_{\bar{\lambda}}(x) = (S_{\bar{\lambda}} \circ S_{\bar{\lambda}})(x) - x$, and
$H_{\bar{\lambda}}(x) =  \bar{\lambda} h_{\bar{\lambda}, \epsilon}(x) - x$ where 
\begin{equation} \label{eq-h(x)}
 h_{\bar{\lambda}, \epsilon}(x) = \bar{\lambda} x (1-x) -  \bar{\lambda}^2 \left({ x (1-x)}\right)^2   + \epsilon
 \end{equation}
 for some $\epsilon > 0$.  From here on we drop the subscripts on $h$ for clarity.
 
The function $F_{\bar{\lambda}}(x)$ has four zeros. Two of the zeros are the two fixed points of $S_{\bar{\lambda}}(x)$: $0$ and $\frac{\bar{\lambda}-1}{\bar{\lambda}}= x^*(\bar{\lambda})$; two zeros are given by the period-2 orbit:  $p(\bar{\lambda})$ and $q(\bar{\lambda})$, as defined in Equation (\ref{eq:per2pts}).
We see that the function $H$ is just a shift upward of the function $F$ by $\bar{\lambda} \epsilon$: $H(x) = F(x) + \bar{\lambda} \epsilon$.   If $\epsilon$ is small enough, $H$ will have the same number of zeros as $F$.  If we label the zeros of $H$ as: $z_H < p_H < x^*_H < q_H$, we have (see Figure \ref{fig:HandF}):
\begin{equation}
    \label{eq:pqinequal2}
z_H <  0  < p(\bar{\lambda} ) < p_H  < x^*_H  < x^*(\bar{\lambda}) = \frac{\bar{\lambda}-1}{\bar{\lambda}} < q(\bar{\lambda} )  < q_H < 1 .
\end{equation}

Now, fix $\bar{\lambda}$, and we will drop the subscript $\bar{\lambda}$ on all the functions for readability.
We need the following to complete the argument.
\begin{claim}
The second derivative of $h(x)$, $h''(x)$,  is strictly positive for $x \in I_p$, and $h''(x)$ is strictly negative for $x \in I_q$, when $\Delta \lambda$ is small enough.
\label{h2claim}
\end{claim}

By Claim \ref{h2claim}, the function $h$ is concave up on $I_p$, the support for $\mu^*_p(x)$, and $h$ is concave down on $I_q$, the support for $\mu^*_q(x)$.  
 This implies, by Jensen's Inequality, that 
\begin{equation} \label{eq:Jensens1}
E_{\mu_p^*}[h(X)] > h(E_{\mu_p^*}[X]) \Rightarrow 
E_{\mu_p^*}[h(X)] =  h(E_{\mu_p^*}[X]) + \delta \quad \hbox{for some } \delta > 0. 
\end{equation}
and
\begin{equation} \label{eq:Jensens2}
E_{\mu_q^*}[h(X)] < h(E_{\mu_q^*}[X]) \Rightarrow 
E_{\mu_q^*}[h(X)] =  h(E_{\mu_q^*}[X]) - \delta^* \quad \hbox{for some } \delta^* > 0. 
\end{equation}
 
We first prove Claim \ref{h2claim} and then continue with the proof of Lemma \ref{Lemma3}.

\begin{proof}
(of Claim \ref{h2claim}) By direct calculation we see that 
$$h''(x) = -2( \bar{\lambda} + \bar{\lambda}^2) + 12 \bar{\lambda}^2 (x - x^2).$$
Thus, $h''(x) > 0$ for $x \in (r_1, r_2)$ where
$$ r_{1,2} = \frac{1}{2} \pm \frac{1}{2} 
\sqrt{1 - \frac{2}{3} \left({\frac{1}{\bar{\lambda}} + 1 }\right) }$$
We will establish the following sequence of inequalities, which holds for sufficiently small values of $\Delta \lambda$:
\begin{equation}
    \label{eq:r1r2inequality}
r_1 < x_{p,\min} < x_{p,\max} < r_2 < x^*_{\bar{\lambda}} < x_{q,\min}.
\end{equation}

Together with the inequalities in Equation (\ref{eq:pqinequal}), the inequalities in Equation (\ref{eq:r1r2inequality}) prove Claim \ref{h2claim}.  We therefore split the justification of Claim \ref{h2claim} into the three relevant inequalities of Equation (\ref{eq:r1r2inequality}):

\begin{enumerate}
\item $r_1 < x_{p,\min} = S_{\bar{\lambda}-\Delta \lambda}(q_+).$ \\
{\it Proof:} Since $\bar{\lambda} > 3$, we have $\frac{1}{\bar{\lambda}} < \frac{1}{3}$, and so
$$r_{1} = \frac{1}{2} - \frac{1}{2} 
\sqrt{1 - \frac{2}{3} \left({\frac{1}{\bar{\lambda}} + 1 }\right) } < \frac{1}{3}$$
We know that $3 < \bar{\lambda} - \Delta \lambda$ and $\frac{1}{2} < q_+ < q(1 + \sqrt{6}),$ which implies that $S_3(q(1 + \sqrt{6})) < S_{\lambda_-}(q_+).$  (This is true because $S_\lambda(x)$ is increasing in $\lambda$ and decreasing in $x > \frac{1}{2}.$)  By direct calculation, $\frac{1}{3} < S_3(q(1 + \sqrt{6})).$  Therefore, $r_1 < \frac{1}{3} < S_3(q(1 + \sqrt{6})) < S_{\bar{\lambda}-\Delta \lambda}(q_+) = x_{p,\min} .$

\item $r_2 > x_{p,\max} = S_{\bar{\lambda}+\Delta \lambda}(q_-)$.\\
{\it Proof:} Since $\bar{\lambda} > 3$, we have $\frac{1}{\bar{\lambda}} < \frac{1}{3}$, and so
$$r_{2} = \frac{1}{2} + \frac{1}{2} 
\sqrt{1 - \frac{2}{3} \left({\frac{1}{\bar{\lambda}} + 1 }\right) } > \frac{2}{3} \Rightarrow r_2 = \frac{2}{3} + \eta, \quad \hbox{ for some } \eta > 0
$$

We note that $\frac{2}{3} = p(3) > p_-$, since the values of $p$ decrease as $\lambda$ increases.  Therefore, for $\Delta \lambda$ small enough, we can ensure that 
$p(3) + \eta > \frac{\bar{\lambda} + \Delta \lambda}{\bar{\lambda} - \Delta \lambda} \cdot p_-$ which gives:
$$r_2 > \frac{\bar{\lambda} + \Delta \lambda}{\bar{\lambda} - \Delta \lambda} \cdot p_- = (\bar{\lambda} + \Delta \lambda)
\frac{
(\bar{\lambda} - \Delta \lambda)q_-(1-q_-)}{\bar{\lambda} - \Delta \lambda} = S_{\bar{\lambda}+\Delta \lambda}(q_-).
$$
\item $r_2 < x^*_{\bar{\lambda}} < x_{q,\min}$ \\

{\it Proof:} We show that $r_2 < x^*_{\bar{\lambda}}$, and the remaining inequality follows from Equation \eqref{eq:pqinequal}.  
\begin{eqnarray*}
     x^*_{\bar{\lambda}} > r_2 &\Leftrightarrow& 
1 - \frac{1}{\bar{\lambda}} > \frac{1}{2} \left({1 + \sqrt{1 - \frac{2}{3} \left({ \frac{1}{\bar{\lambda}} +1}\right)} }\right)  \\
&\Leftrightarrow& 
1 - \frac{2}{\bar{\lambda}} > \sqrt{1 - \frac{2}{3} \left({ \frac{1}{\bar{\lambda}} +1}\right)} \\
&\Leftrightarrow& 
\frac{1}{\bar{\lambda}}+1 > \frac{6}{\bar{\lambda}} - \frac{6}{\bar{\lambda}^2} \\
&\Leftrightarrow& 
\bar{\lambda}^2 > 5 \bar{\lambda} - 6 \\
\end{eqnarray*}
which holds if $\bar{\lambda} > 3$.  Therefore, since the fixed point $x^*$ increases as $\lambda$ increases, we have:
$$r_2 < x^*_{\bar{\lambda}} < x^*_{\bar{\lambda} + \Delta \lambda} < x_{q,\min}$$
\end{enumerate}

 Therefore, $h''(x) > 0$ for all $x \in I_p$, and $h''(x) < 0$ for all $x \in I_q$.
 \end{proof}

Now we return to the proof of Lemma \ref{Lemma3}.
By applying Lemma \ref{Lemma1} twice and using Equation \eqref{eq:Jensens1}  we get:
$$ E_{\mu_p^*}[X] = E_{\mu_q^*}[S_{\bar{\lambda}}(X)] = E_{\mu_p^*}[S^2_{\bar{\lambda}}(X)]
= \bar{\lambda}E_{\mu_p^*}[h(X)] - \bar{\lambda} \epsilon
= \bar{\lambda} h(E_{\mu_p^*}[X]) + \bar{\lambda} (\delta - \epsilon).$$

Rearranging the equation, we have 
\begin{equation}
\bar{\lambda} h(E_{\mu_p^*}[X]) - E_{\mu_p^*}[X] = \bar{\lambda}(\epsilon - \delta)  = H(E_{\mu_p^*}[X]) .
\end{equation}
\noindent
Since $\epsilon$ can be arbitrarily small, 
and since $\delta$ depends only on $h''$ which is independent of $\epsilon$, we can make sure that  $0 < \epsilon < \delta$, so that $H(E_{\mu_p^*}[X]) < 0 $.  This means that $E_{\mu_p^*}[X]$ is either less than $z_H < 0$, greater than $q_H$, or in the interval $(p_H, x^*_H)$ (see Figure \ref{fig:HandF}).   By Lemma \ref{LambdaClaim}, it is not possible for $E_{\mu_p^*}[X]$ to be in the support of $\mu_q^*$, which implies that $ p_H < E_{\mu_p^*}[X] < x^*_H$. Because $H(x) > F(x)$,  $p(\bar{\lambda}) < p_H$, from which we conclude that
\begin{equation}
\label{eq:qbias}
E_{\mu_p^*}[X] >  p(\bar{\lambda})
\end{equation}
and, as promised, the ``left peak" shifts up with noise added, i.e., when $\Delta \lambda > 0$.

To prove the statement about the right peak, we repeat the argument with the function $H$  replaced by the function $\hat{H} = F - \bar{\lambda} \hat{\epsilon}$, where $\hat{\epsilon}> 0$.  In other words, $\hat{H}$ is the function $F$ shifted {\it down} by a small amount.  In the argument, we replace the function $h$ by $\hat{h}$, where
$$\hat{h}_{\bar{\lambda},\epsilon} = \bar{\lambda}x(1-x)-\bar{\lambda}^2(x(1-x))^2 - \hat{\epsilon}.$$
The remainder of the argument that $E_{\mu_q^*}[X] <  q(\bar{\lambda})$ follows directly from the previous full proof of $E_{\mu_p^*}[X] > p(\bar{\lambda})$ with the inequalities reversed.
\end{proof}

\begin{figure}[H]
\begin{center}
\includegraphics[width = .8\textwidth]{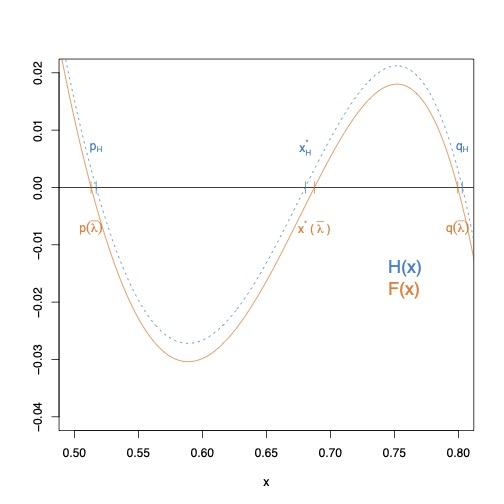}
\end{center}
\caption{$H(x)$ (dashed blue line) is the function which shifts $F(x)$ (solid orange line) up by a small amount, $\epsilon > 0$.  The curvature of the two functions is identical, but the zeros of $H(x)$ are all shifted. In particular, the zero at $p_H$ is to the right of $p(\bar{\lambda})$, which is the zero of $F$ corresponding to the smaller period-2 point of $S_{\bar{\lambda}}$.
}
\label{fig:HandF}
\end{figure}

In this part of the discussion,  we fix $\bar{\lambda}$ and consider values of the parameter, $\lambda$, uniformly distributed between $\bar{\lambda} - \Delta \lambda$ and $\bar{\lambda} + \Delta \lambda$.  We want to emphasize here and in what follows that the measures $\mu_p^*$ and $\mu_q^*$ are functions of $\Delta \lambda$,  defined in Equation \eqref{inv_measures}.   In particular, we define a function:
\begin{eqnarray}
V(\Delta \lambda) = E_{\mu_q^*(\Delta \lambda)} [X^2] - E_{\mu_q^*(\Delta \lambda)}[X]^2. \label{eq:vfunc}
\end{eqnarray}
That is, $V(\Delta \lambda)$ gives the variance of $X$ with respect to the measure $\mu_q^*$, which is a function of $\Delta \lambda$.  The variance function, $V(\Delta \lambda)$ measures the difference between $E_{\mu_q^*(\Delta \lambda)} [X^2]$ and $E_{\mu_q^*(\Delta \lambda)}[X]^2$ as a function of $\Delta \lambda$.

The technical lemma, Lemma \ref{Lemma2},  is used to prove Lemma \ref{Lemma4}.  We show directly that the right-hand derivative of the function $V(\Delta \lambda)$ is zero when $\Delta \lambda = 0 $.  To show this, we us a very rough bound of the variance as the squared difference between the maximum and minimum values of the support of $\mu^*_q$.

\begin{lemma} \label{Lemma2}
$$V_{+}'(0)  =  \lim_{h \rightarrow 0^+} \frac{V(h + 0 )- V(0) }{h}  = 0 $$
\end{lemma}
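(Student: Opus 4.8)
The plan is to use the elementary fact, hinted at in the paragraph preceding the lemma, that the variance of a random variable supported on an interval is bounded by the square of the length of that interval, and then to show that the relevant interval collapses at a \emph{linear} rate as $\Delta\lambda \to 0^+$. By Lemma \ref{LambdaClaim} the measure $\mu_q^*(\Delta\lambda)$ (defined in Equation \eqref{inv_measures}) is supported in $I_q = [x_{q,\min}, x_{q,\max}]$, whose endpoints are given explicitly in Equation \eqref{eq:xminxmax}. Writing $w(\Delta\lambda) = x_{q,\max} - x_{q,\min}$ for the width of this support, the governing estimate is
$$ 0 \le V(\Delta\lambda) = \min_c E_{\mu_q^*}\big[(X-c)^2\big] \le E_{\mu_q^*}\big[(X - x_{q,\min})^2\big] \le w(\Delta\lambda)^2, $$
where the last inequality holds because $0 \le X - x_{q,\min} \le w(\Delta\lambda)$ pointwise on $I_q$. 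This is the ``very rough bound'' referred to in the text.

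First I would record that $V(0)=0$. As $\Delta\lambda \to 0^+$ one has $p_\pm \to p(\bar\lambda)$, $q_\pm \to q(\bar\lambda)$, and $S_{\bar\lambda \pm \Delta\lambda}(\cdot) \to S_{\bar\lambda}(\cdot)$, so every branch appearing in the definition of $x_{q,\min}$ and $x_{q,\max}$ in Equation \eqref{eq:xminxmax} converges to $q(\bar\lambda)$ (using $S_{\bar\lambda}(p(\bar\lambda)) = q(\bar\lambda)$, and, in the degenerate case $\bar\lambda = 1+\sqrt5$, the identity $q(\bar\lambda)=\bar\lambda/4$). Hence $w(0)=0$, and the displayed bound forces $0 \le V(0) \le w(0)^2 = 0$; equivalently, at $\Delta\lambda=0$ the support of $\mu_q^*$ degenerates to the single point $q(\bar\lambda)$, so $\mu_q^*(0)$ is a point mass and its variance vanishes.

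The main work is to show $w(\Delta\lambda) = O(\Delta\lambda)$. The endpoints $x_{q,\min}$ and $x_{q,\max}$ are finite minima and maxima of expressions of the form $S_{\bar\lambda \pm \Delta\lambda}(\,\cdot\,)$ evaluated at $p_\pm = p(\bar\lambda \pm \Delta\lambda)$, $q_\pm = q(\bar\lambda \pm \Delta\lambda)$, or $\tfrac12$. Because $\bar\lambda$ is fixed and bounded strictly away from $3$, the discriminant $(\lambda-3)(\lambda+1)$ in Equation \eqref{eq:per2pts} stays bounded away from zero, so $p(\lambda)$ and $q(\lambda)$ are $C^1$ (in particular locally Lipschitz) functions of $\lambda$ on the relevant range; and $(\lambda,x)\mapsto S_\lambda(x)$ is smooth. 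Each branch is therefore a differentiable, hence locally Lipschitz, function of $\Delta\lambda$, and a finite min or max of locally Lipschitz functions is again locally Lipschitz. Since all branches coincide at $q(\bar\lambda)$ when $\Delta\lambda = 0$, we obtain $w(0)=0$ together with a constant $C$ such that $w(\Delta\lambda) \le C\,\Delta\lambda$ for all small $\Delta\lambda > 0$.

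Assembling the pieces, for small $h>0$,
$$ 0 \le \frac{V(h) - V(0)}{h} = \frac{V(h)}{h} \le \frac{w(h)^2}{h} \le \frac{C^2 h^2}{h} = C^2 h, $$
and letting $h \to 0^+$ the squeeze theorem gives $V_+'(0) = 0$, as in Equation \eqref{eq:vfunc}. The step I expect to be the main obstacle is the Lipschitz control of the width $w(\Delta\lambda)$: one must verify, using the explicit formulas of Equation \eqref{eq:xminxmax}, that the switching between branches in the $\min$/$\max$ cannot create a corner whose one-sided slope blows up — but because every branch individually vanishes linearly and all branches agree at $q(\bar\lambda)$ at $\Delta\lambda=0$, the resulting envelope remains $O(\Delta\lambda)$ no matter which branch is active. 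The crudeness of the bound $V \le w^2$ is precisely what saves the argument: the quadratic numerator against the linear denominator leaves an extra factor of $h$ to send to zero.
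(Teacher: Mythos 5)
Your overall strategy is the same as the paper's: bound the variance by the squared width of the support of $\mu_q^*$, and show that this width shrinks linearly in $\Delta\lambda$, so that the quadratic numerator beats the linear denominator. Your replacement of the paper's explicit expansions and l'Hopital computations by a soft regularity argument (the discriminant $(\lambda-3)(\lambda+1)$ stays bounded away from zero, so $p(\lambda)$ and $q(\lambda)$ are $C^1$, and a finite min/max of locally Lipschitz branches is locally Lipschitz) is legitimate and would in fact streamline the paper's three-case computation. The bound $V \le w^2$ and the observation $V(0)=0$ are both correct.

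However, there is a genuine gap at exactly the step you flag as the main obstacle. The premise that \emph{every} branch appearing in Equation \eqref{eq:xminxmax} converges to $q(\bar{\lambda})$ is false unless $\bar{\lambda} = 1+\sqrt{5}$: the branch $S_{\bar{\lambda}+\Delta\lambda}(1/2)$ converges to $S_{\bar{\lambda}}(1/2) = \bar{\lambda}/4$, and since $q(\bar{\lambda}) = S_{\bar{\lambda}}(p(\bar{\lambda}))$ with $p(\bar{\lambda}) \neq 1/2$, one has $q(\bar{\lambda}) < \bar{\lambda}/4$ strictly (e.g.\ for $\bar{\lambda}=3.1$, $q(\bar{\lambda}) \approx 0.7646$ while $\bar{\lambda}/4 = 0.775$). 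Taken literally, the formula for $x_{q,\max}$ then gives $w(\Delta\lambda) \rightarrow \bar{\lambda}/4 - q(\bar{\lambda}) > 0$, your Lipschitz bound $w(\Delta\lambda) \le C\,\Delta\lambda$ fails, and the squeeze $V(h)/h \le w(h)^2/h$ yields nothing. The repair is precisely the case analysis the paper performs (its Cases 1--3): the critical-point branch is only genuinely present when $1/2 \in [p_+, p_-]$, i.e.\ when the maximum of $S_\lambda$ over $[p_+,p_-]$ is attained at the interior critical point. For fixed $\bar{\lambda} > 1+\sqrt{5}$ (resp.\ $\bar{\lambda} < 1+\sqrt{5}$) and $\Delta\lambda$ small, $S_\lambda$ is monotone on $[p_+,p_-]$, so $x_{q,\max}$ is the image of an endpoint ($S_{\bar{\lambda}+\Delta\lambda}(p_-)$, resp.\ $q_+$), which does converge to $q(\bar{\lambda})$ at a linear rate; only when $\bar{\lambda} = 1+\sqrt{5}$ is the $1/2$ branch active, and there it converges to $\bar{\lambda}/4 = q(\bar{\lambda})$. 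Once you restrict the branch set case by case --- equivalently, define $x_{q,\max} = \max_{\lambda,\, x\in[p_+,p_-]} S_\lambda(x)$ and note that this equals $S_{\bar{\lambda}+\Delta\lambda}$ evaluated at the projection of $1/2$ onto $[p_+,p_-]$, a Lipschitz function of $\Delta\lambda$ --- all active branches do coincide at $q(\bar{\lambda})$ when $\Delta\lambda = 0$, and your argument closes.
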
 
\begin{proof}
We first note that $V(0) = 0$.  

We consider the support of $\mu_q^*(\cdot, \Delta \lambda, \bar{\lambda}) = (x_{q,\min}, x_{q,\max} )$, the right peak of the invariant distribution, $\mu^*$,  in three distinct cases, and prove the lemma for each case separately.

\begin{itemize}
\item[Case 1]
If $\bar{\lambda} > 1 + \sqrt{5}$ and $\Delta \lambda$ is sufficiently small, in which case $x_{q,\min} = S_{\bar{\lambda} - \Delta \lambda} (p_+)$
and $x_{q. \max} = S_{\bar{\lambda} + \Delta \lambda}(p_-) $.
\item[Case 2]
If $\bar{\lambda} < 1 + \sqrt{5}$ and $\Delta \lambda$ is sufficiently small, in which case $x_{q, \min} = q_-$ and  $x_{q,\max} = q_+$.
\item[Case 3]
If $\bar{\lambda} = 1 + \sqrt{5}$ in which case 
 $x_{q, \min} = q_-$ and
 $x_{q,\max} = S_{\bar{\lambda} + \Delta \lambda}(\frac{1}{2})$.
\end{itemize}

\begin{itemize}
\item[Case 1]
We start with $\bar{\lambda} > 1 + \sqrt{5}$, which means that $p(\bar{\lambda}) < \frac{1}{2}$. For small enough $\Delta \lambda$, it follows that $p_+ < p_- < \frac{1}{2}$, i.e., $S_{\lambda}$ is monotonically increasing on the interval $[p_+, p_-]$ for all $\lambda \in [\bar{\lambda} - \Delta \lambda, \bar{\lambda} + \Delta \lambda]$.  This means that, for $x_t \in [p_+, p_1]$, the smallest possible value of 
$x_{t+1}$ is $S_{\bar{\lambda} - \Delta \lambda}(p_+) \triangleq x_{q,\min}$ and the largest possible value of $x_{t+1}$ is $S_{\bar{\lambda} + \Delta \lambda}(p_-) \triangleq x_{q,\max}$. 

 It follows that the squared difference of  of the length of the interval from $x_{q, min}$ to $x_{q, max}$ gives a (rather crude) upper bound for the variance, i.e., $V(h) \le (x_{q,\max} - x_{q,\min})^2$.   Thus:
$$ \lim_{h \rightarrow 0^+} \frac{V(h)}{h} \le \lim_{h \rightarrow 0^+} \frac{\left({
S_{\bar{\lambda} - \Delta \lambda} (p_+) - S_{\bar{\lambda} + \Delta \lambda}(p_-)
}\right)^2}{h}  $$

Writing $\Delta \lambda  = h$,  and expressing everything in terms of $q_+$ and $q_-$:

\begin{eqnarray}
(
S_{\bar{\lambda} - \Delta \lambda} (p_+) &-& S_{\bar{\lambda} + \Delta \lambda}(p_-)
)^2 \nonumber \\
&=&
\left({ (\bar{\lambda} + h )p_- (1 - p_-) - ( \bar{\lambda} - h) p_+ (1 - p_+)  }\right)^2 \nonumber \\
&=&
\left({ (\bar{\lambda} - h)  p_- (1 - p_-) + 2 h p_- (1 - p_-) 
- (\bar{\lambda} + h )p_+ (1 - p_+) 
+ 2 h p_+ (1 - p_+) }\right)^2 \nonumber \\
&=&
 \left({  (q_+ - q_-) -  2 h
\left[{ \frac{q_-}{\bar{\lambda} - h} + \frac{q_+}{\bar{\lambda} + h} }\right] }\right)^2  														\nonumber  \\
&=&
(q_+- q_-)^2 
- 4 h (q_+ - q_-)\left[{ \frac{q_-}{\bar{\lambda} - h} + \frac{q_+}{\bar{\lambda} + h} }\right] + \nonumber \\
&& \hspace*{2.1cm} 4 (h)^2 \left[{ \frac{q_-}{\bar{\lambda} - h} + \frac{q_+}{\bar{\lambda} + h} }\right] ^2  \label{eq:Vnum}
\end{eqnarray}

We now divide both sides of Equation \eqref{eq:Vnum}  by $h$ and take the limit as $h$ goes to zero:
\begin{equation} \label{eq:dVdLambda}
\lim_{h \rightarrow 0^+} \frac{V(h)}{h} \le
\lim_{h \rightarrow 0^+}\left({
\frac{(q_+ - q_-)^2 }{h} - 4  (q_+ - q_-)\left[{ \frac{q_-}{\bar{\lambda} - h} + \frac{q_+}{\bar{\lambda} + h} }\right] 
 + 4 (h) \left[{ \frac{q_-}{\bar{\lambda} - h} + \frac{q_+}{\bar{\lambda} + h} }\right] ^2 }\right)
\end{equation}

Since $\lim_{h \rightarrow 0^+} q_- = \lim_{h \rightarrow 0^+} q_+$, we see that the middle term of Equation \eqref{eq:dVdLambda} goes to zero, and the last term goes to zero because $h \rightarrow 0$.  Expanding the first term using Equation \eqref{eq:per2pts} gives:
 $$ q_+ - q_- = \frac{h}{(\bar{\lambda} + h)(\bar{\lambda} - h)} - \frac{(\bar{\lambda} + h)\sqrt{(\bar{\lambda} - h)^2 - 2 (\bar{\lambda} - h) - 3}}{2 (\bar{\lambda} + h)(\bar{\lambda} - h)}
 + \frac{(\bar{\lambda} - h)\sqrt{(\bar{\lambda} + h)^2 - 2 (\bar{\lambda} + h) - 3}}{2 (\bar{\lambda} + h)(\bar{\lambda} - h)}
 $$

 Squaring and dividing by $h$:
$$\frac{(q_+ - q_-)^2}{h} = \mbox{(A)} + \mbox{(B)} + \frac{\mbox{(C)}}{\mbox{(D)}}$$

where
\begin{eqnarray*}
\mbox{(A)} &=& \frac{h}{(\bar{\lambda} + h)^2 (\bar{\lambda} - h)^2}\\
\mbox{(B)} &=& \frac{1}{(\bar{\lambda} + h)(\bar{\lambda} - h)} \left[{\frac{(\bar{\lambda} - h)\sqrt{(\bar{\lambda} + h)^2 - 2 (\bar{\lambda} + h) - 3}}{(\bar{\lambda} + h)(\bar{\lambda} - h)} - \frac{(\bar{\lambda} + h)\sqrt{(\bar{\lambda} - h)^2 - 2( \bar{\lambda} - h) - 3}}{(\bar{\lambda} + h)(\bar{\lambda} - h)} }\right] \\
\mbox{(C)} &=& (\bar{\lambda} - h)^2 ((\bar{\lambda} + h)^2 - 2 (\bar{\lambda} + h) - 3) +  (\bar{\lambda} + h)^2 ((\bar{\lambda} - h)^2 - 2 (\bar{\lambda} - h) - 3) - \\
&& 2( \bar{\lambda} + h)(\bar{\lambda} - h) \sqrt{ ((\bar{\lambda} + h)^2 - 2 (\bar{\lambda} + h) - 3)((\bar{\lambda} - h)^2 - 2( \bar{\lambda} - h) - 3) }\\
 \mbox{(D)} &=& 4 h(\bar{\lambda} + h)^2 (\bar{\lambda} - h)^2
\end{eqnarray*}

Taking the limit as $h$ goes to zero on both sides, we see that (A) is zero because of the $h$ in the numerator; (B) is zero, since $\bar{\lambda} - h \rightarrow \bar{\lambda} + h \rightarrow \bar{\lambda}$ as $h \rightarrow 0^+$.  We can see that $\frac{\mbox{(C)}}{\mbox{(D)}}$ also goes to zero as $h \rightarrow 0^+$ by applying l'Hopital's rule.  
Therefore, $\displaystyle{\lim_{h \rightarrow 0^+} \frac{(q_+ - q_-)^2 }{h}  = 
\lim_{h \rightarrow 0^+} \frac{V(h)}{h} }= 0$.

Since the variance is always positive, we have:
$$ 0 \le \lim_{h \rightarrow 0^+} \frac{V(h)}{h} \le \lim_{h \rightarrow 0^+} \frac{(x_{q, max} - x_{q, min})^2 }{h} = 0 \Rightarrow V'_+(0) = \lim_{h \rightarrow 0^+} \frac{V(h)}{h}  = 0 $$
when $\bar{\lambda} > 1 + \sqrt{5}$.

\item[Case 2]
Next, when $\bar{\lambda} < 1 + \sqrt{5}$,  for small $\Delta \lambda$, the function 
$S_{\lambda}(x)$ is increasing on the interval $[p_+, p_-]$ for all $\lambda \in [\bar{\lambda} - \Delta \lambda,\bar{\lambda} + \Delta \lambda]$.  It follows that  $x_{q,\min} =  q_-$ and $x_{q,\max} = q_+$ so the proof simplifies:
$$ \lim_{h \rightarrow 0^+} \frac{V(h)}{h} \le \lim_{h \rightarrow 0^+} \frac{(q_+ - q_-)^2 }{h} .$$
It has already been shown that $$\lim_{h \rightarrow 0^+} \frac{(q_+ - q_-)^2 }{h} = 0,$$
therefore
$$ 0 \le \lim_{h \rightarrow 0^+} \frac{V(h)}{h} \le \lim_{h \rightarrow 0^+} \frac{(q_+ - q_-)^2 }{h} = 0 \Rightarrow V'_+(0) = \lim_{h \rightarrow 0^+} \frac{V(h)}{h}  = 0 $$
when $\bar{\lambda} < 1 + \sqrt{5}$.

\item[Case 3] Suppose $\bar{\lambda} = 1 + \sqrt{5}$, so that the maximum of the function coincides with the lower period-2 point when the parameter is $\bar{\lambda}$, i.e., $p(\bar{\lambda}) = \frac{1}{2}$.   We claim that in this case
\begin{equation} \label{eq:case3claim}
\lim_{h \rightarrow 0^+} \frac{V(h)}{h} \le \lim_{h \rightarrow 0^+} \frac{(S_{\bar{\lambda}+h}(1/2) - q_-)^2 }{h} .
\end{equation}
To see why this is true, we
let $f(x) = p(x) = \frac{x+1 - \sqrt{x^2 - 2x -3}}{2x}$ be the the smaller period-2 point when $\lambda = x$.   We are interested in values of $x$ near $1 + \sqrt{5}$ that are in the stable period-2 regime, i.e., $3 < x < 1 + \sqrt{6}$.  We see that
$$f'(x) = \frac{-x -3 - \sqrt{x^2 - 2x - 3}}{2x^2 \sqrt{x^2 - 2x -3}} < 0.$$
Furthermore,
$$f''(x) = 
  \frac{x^3 + (\sqrt{x^2 - 2 x - 3} + 3) x^2 - (2 \sqrt{x^2 - 2 x - 3} + 9) x - 3 (\sqrt{x^2 - 2 x - 3} + 3)}
 {x^3 (x^2 - 2 x - 3)^{3/2}} >  0 \quad \hbox{ when } x > 3 .
$$
We conclude that $f(x)$ has negative slope and is concave up for $x$ close to $1 + \sqrt{5}$, which means that $f(1 + \sqrt{5} - \Delta \lambda) $ is farther from $f(1 + \sqrt{5})$ than the distance from $f(1 + \sqrt{5} + \Delta \lambda )$ to $f(1 + \sqrt{5})$.
In other words,  $p_- - \frac{1}{2} > \frac{1}{2} - p_+$.  This implies that $$x_{q,\min} = S_{\bar{\lambda} - \Delta \lambda}\left({p(\bar{\lambda} - \Delta \lambda)}\right) = q_-  \quad. $$
We know that $x_{q,\max}$ is the image of $\frac{1}{2}$ under $S_{\bar{\lambda} + \Delta \lambda}$.  Letting $\Delta \lambda = h$ we have:
$$ V(h) \le (S_{\bar{\lambda}+h}(1/2) - q_-)^2 $$
and Equation \eqref{eq:case3claim} is verified.  

We now calculate the limit:
$$ \lim_{h \rightarrow 0^+} \frac{V(h)}{h} \le \lim_{h \rightarrow 0^+} \frac{(S_{\bar{\lambda}+h}(1/2) - q_-)^2 }{h} .$$
Notice that the numerator in the second limit is zero, since, as $h \rightarrow 0, p_- \rightarrow \frac{1}{2}$ 
$$ \Rightarrow
q_- =  S_{\bar{\lambda}-h}(p_-) \rightarrow S_{\bar{\lambda}}(1/2) = \lim_{h \rightarrow 0} S_{\bar{\lambda} + h}(1/2) .$$
Therefore, we apply l'Hopital's rule to find:
$$
 \lim_{h \rightarrow 0^+} \frac{(S_{\bar{\lambda}+h}(1/2) - q_-)^2 }{h}
 =
  \lim_{h \rightarrow 0^+} 
  2\left({S_{\bar{\lambda}+h}(1/2) - q_-}\right)
   \frac{d}{dh} \left({S_{\bar{\lambda}+h}(1/2) - q_-}\right)
  $$
This limit is zero as long as the last derivative term is finite.  We verify this:
$$
\frac{d}{dh}\left({S_{\bar{\lambda}+h}(1/2) - q_-}\right) =
\frac{d}{dh} \left[{
 \frac{\bar{\lambda}+h}{4}  - 
\frac{\bar{\lambda} - h + 1 -
 \sqrt{(\bar{\lambda}-h)^2 - 2(\bar{\lambda}- h) - 3 }}
 {2(\bar{\lambda}-h) }
 }\right]
 $$
$$
=  \frac{1}{4} + \frac{1}{4(\bar{\lambda} -h)^2}
 \left[{
 1 + \frac{h - \bar{\lambda} + 1}{2  \sqrt{(\bar{\lambda}-h)^2 - 2(\bar{\lambda}- h) - 3 }} 2 (\bar{\lambda} - h) 
 + 2 \left({ \bar{\lambda} - h + 1 -  \sqrt{(\bar{\lambda}-h)^2 - 2(\bar{\lambda}- h) - 3 } }\right)
 }\right]
$$
We now take the limit as $h \rightarrow 0$, remembering that 
$\bar{\lambda} = 1 + \sqrt{5}$, so that 
$\sqrt{\bar{\lambda}^2 - 3 \bar{\lambda} = 3} = 1$, which gives:
\begin{eqnarray*}
\lim_{h \rightarrow 0} 
\frac{d}{dh}\left({S_{\bar{\lambda}+h}(1/2) - q_-}\right) &=&
\frac{1}{4} + \frac{1}{4 \bar{\lambda}^2} 
\left[{
1 + \bar{\lambda}(1 - \bar{\lambda}) + 2 \bar{\lambda}
}\right] 
\\ &=&
\frac{1}{4} + \frac{1}{4 \bar{\lambda}^2} 
\left[{
1 - \bar{\lambda}^2 + 3 \bar{\lambda} 
}\right]
\\ &=&
\frac{1}{4} \left({ \frac{1 + 3 \bar{\lambda}}{\bar{\lambda}^2} 
}\right) < \infty
\end{eqnarray*}
Therefore, by l'Hopital's rule, the limit is zero, and
$$ \lim_{h \rightarrow 0^+} \frac{V(h)}{h} \le \lim_{h \rightarrow 0^+} \frac{(S_{\bar{\lambda}+h}(1/2) - q_-)^2 }{h} = 0  $$
\end{itemize}
\end{proof}

\begin{lemma}{The change in the expected value of the left peak of the invariant distribution is greater in magnitude than the change in the expected value of the right peak.}  \label{Lemma4}
$$ \frac{dE_{\mu_p^*}[X]}{d \Delta \lambda} \ge - \frac{d E_{\mu_q^*} [X]}{d \Delta \lambda} $$
\end{lemma}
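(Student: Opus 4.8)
The plan is to reduce the inequality to a statement about the signs of two one-sided variance derivatives at $\Delta\lambda = 0$, one of which is supplied by Lemma \ref{Lemma2} and the other of which comes essentially for free. Throughout, fix $\bar{\lambda} \in (3, 1+\sqrt{6})$, write $m_p(\Delta\lambda) = E_{\mu_p^*}[X]$ and $m_q(\Delta\lambda) = E_{\mu_q^*}[X]$, and let $V_p$ and $V_q = V$ denote the variances of $X$ under $\mu_p^*$ and $\mu_q^*$. By the Remark following Lemma \ref{LambdaClaim}, $P^*$ swaps $\mu_p^*$ and $\mu_q^*$, so the computation proving Lemma \ref{Lemma1} applies verbatim with the roles of $p$ and $q$ exchanged (it used only $P^*\mu_p^* = \mu_q^*$ and the uniform density, not any monotonicity), giving the companion identity $E_{\mu_p^*}[X] = E_{\mu_q^*}[S_{\bar{\lambda}}(X)]$. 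Rewriting both identities via $E[X^2] = V + m^2$ yields the coupled system
$$ m_q = S_{\bar{\lambda}}(m_p) - \bar{\lambda} V_p, \qquad m_p = S_{\bar{\lambda}}(m_q) - \bar{\lambda} V_q . $$

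Next I would differentiate both relations with respect to $\Delta\lambda$ (holding $\bar{\lambda}$ fixed) and evaluate the right-hand derivatives at $\Delta\lambda = 0$, where $m_p(0) = p(\bar{\lambda})$ and $m_q(0) = q(\bar{\lambda})$. Writing $a = m_p'(0^+)$ and $b = m_q'(0^+)$, and using $S'_{\bar{\lambda}}(x) = \bar{\lambda}(1-2x)$, the chain rule gives $b = S'_{\bar{\lambda}}(p)\,a - \bar{\lambda} V_p'(0^+)$ and $a = S'_{\bar{\lambda}}(q)\,b - \bar{\lambda} V_q'(0^+)$. The crucial input is Lemma \ref{Lemma2}, namely $V_q'(0^+) = 0$, which collapses the second relation to the clean $a = S'_{\bar{\lambda}}(q)\,b$. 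A direct substitution of the period-2 formulas \eqref{eq:per2pts} shows $S'_{\bar{\lambda}}(p) = -1 + s$ and $S'_{\bar{\lambda}}(q) = -1 - s$, where $s = \sqrt{\bar{\lambda}^2 - 2\bar{\lambda} - 3} > 0$, so that $1 - S'_{\bar{\lambda}}(p)S'_{\bar{\lambda}}(q) = s^2 > 0$.

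Substituting $a = S'_{\bar{\lambda}}(q)\,b$ into the first differentiated relation gives $b\,s^2 = -\bar{\lambda} V_p'(0^+)$. Here the sign of $V_p'(0^+)$ is free: since $V_p(\Delta\lambda) \ge 0$ for all $\Delta\lambda \ge 0$ and $V_p(0) = 0$ (at zero noise $\mu_p^*$ is a point mass at $p(\bar{\lambda})$), the nonnegative function $V_p$ attains its minimum at the left endpoint, so $V_p'(0^+) \ge 0$. Hence $b \le 0$. Finally, the target inequality $a \ge -b$ is equivalent to $a + b = (S'_{\bar{\lambda}}(q) + 1)\,b = -s\,b \ge 0$, which holds because $s > 0$ and $b \le 0$. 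This completes the reduction.

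I expect the main obstacle to be purely technical: justifying that $m_p, m_q, V_p, V_q$ are one-sided differentiable functions of $\Delta\lambda$ at $0$, so that applying the chain rule to the coupled system is legitimate; the algebra itself is short. I would also emphasize that Lemma \ref{Lemma2} is genuinely needed in its exact form $V_q'(0^+) = 0$ rather than merely $V_q'(0^+) \ge 0$: keeping a nonzero $V_q'(0^+)$ would leave an uncontrolled term $-\bar{\lambda} S'_{\bar{\lambda}}(p) V_q'(0^+)$ whose sign is indeterminate, since $S'_{\bar{\lambda}}(p) = -1 + s$ changes sign across $\bar{\lambda} = 1 + \sqrt{5}$ (exactly the trichotomy of cases in Lemma \ref{Lemma2}). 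Thus the vanishing of $V_q'(0^+)$ is precisely what decouples the system and forces $b \le 0$.
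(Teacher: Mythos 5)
Your proof is correct modulo the one caveat you yourself flag (one-sided differentiability of $m_p$, $m_q$, $V_p$, $V_q$ at $\Delta\lambda=0$, which the paper also assumes without comment), and it is a genuinely different argument from the paper's. The paper uses only the swapped identity $E_{\mu_p^*}[X]=\bar{\lambda}\left({E_{\mu_q^*}[X]-E_{\mu_q^*}[X^2]}\right)$, differentiates it, and chains inequalities: the variance derivative is replaced by a small error $\gamma$ (Lemma \ref{Lemma2} plus a continuity-of-$V'$ claim relegated to a footnote), $E_{\mu_q^*}[X]$ is bounded below by the fixed point $(\bar{\lambda}-1)/\bar{\lambda}$, and the sign $\frac{d}{d\Delta\lambda}E_{\mu_q^*}[X]<0$ is imported from outside the lemma. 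You instead keep both identities as a coupled system, linearize exactly at $\Delta\lambda=0$ where $m_p(0)=p(\bar{\lambda})$ and $m_q(0)=q(\bar{\lambda})$, and exploit the explicit multipliers $S'_{\bar{\lambda}}(p)=s-1$, $S'_{\bar{\lambda}}(q)=-1-s$, $1-S'_{\bar{\lambda}}(p)S'_{\bar{\lambda}}(q)=s^2$ with $s=\sqrt{\bar{\lambda}^2-2\bar{\lambda}-3}$; Lemma \ref{Lemma2} decouples the system and the sign of $b=m_q'(0^+)$ then falls out of $V_p'(0^+)\ge 0$, so you need neither Lemma \ref{Lemma3} nor the fixed-point bound nor the $\gamma$ device, and your observation that the exact vanishing $V_q'(0^+)=0$ (tied to the sign change of $S'_{\bar{\lambda}}(p)$ at $\bar{\lambda}=1+\sqrt{5}$, matching Lemma \ref{Lemma2}'s trichotomy) is what makes this work is an insight the paper never articulates. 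The trade-off is scope: your argument establishes the inequality only at $\Delta\lambda=0^+$, whereas the paper's chain (whatever its rigor) is asserted for all sufficiently small $\Delta\lambda$, and it is that interval form that the proof of Theorem \ref{PeriodTwoTheorem} actually invokes when it integrates the derivative comparison to conclude $E[X]$ increases; to feed your lemma into the theorem you would need an extension off $\Delta\lambda=0$. Indeed your system cuts deeper than you note: the same crude support bound that proves Lemma \ref{Lemma2} applies verbatim to $I_p$ (its endpoints are differentiable in $\Delta\lambda$, so its width is $O(\Delta\lambda)$), giving $V_p'(0^+)=0$ as well, whence $b s^2 = 0$, so $a=b=0$ and your inequality holds with equality at $\Delta\lambda=0^+$. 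This shows that a first-order analysis at zero can never by itself produce the strict inequality of Theorem \ref{PeriodTwoTheorem} --- a limitation of the lemma as stated (and of the paper's overall framework), not an error in your argument.
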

\begin{proof}

From Lemma \ref{Lemma2} we know that

$$\frac{d}{d \Delta \lambda} \bigg[ E_{\mu_q^*} [X]^2 - E_{\mu_q^*} [X^2] \bigg] \bigg|_{\Delta \lambda = 0}= V_{+}'(0)  = 0$$

Because $V'(\Delta \lambda)$  is continuous from above at zero\footnote{
$$ \lim_{\Delta \lambda \rightarrow 0} V'(\Delta \lambda)
= \lim_{\Delta \lambda \rightarrow 0} \lim_{h \rightarrow 0} 
\frac{V(\Delta \lambda + h)}{h} 
= \lim_{h \rightarrow 0}\frac{1}{h}\lim_{\Delta \lambda \rightarrow 0 } V(\Delta \lambda + h) = 0 
$$
}
 we know that there exists a $\gamma$ such that $|\gamma| < \frac{3 - \bar{\lambda} }{\bar{\lambda}}\frac{d}{d \Delta \lambda} E_{\mu_q^*} [X]$ (note that $3-\bar{\lambda}$ and $\frac{d}{d \Delta \lambda} E_{\mu_q^*} [X]$ are both negative, so their product is positive) and

$$ V'(\Delta \lambda) = \frac{d}{d \Delta \lambda} \bigg[ E_{\mu_q^*} [X]^2 - E_{\mu_q^*} [X^2] \bigg] = \gamma,$$

for $\Delta \lambda$ small enough.  Note that $\gamma$ might be positive or negative, but for a small value of $\Delta \lambda$ we can find a small enough $\gamma$ in magnitude such that the two expected values are sufficiently close.

\begin{eqnarray}
\frac{d}{d \Delta \lambda}  E_{\mu_p^*} [X] &=& \bar{\lambda} \cdot \frac{d}{d \Delta \lambda} \bigg[ E_{\mu_q^*} [X] - E_{\mu_q^*} [X^2]  \bigg], \mbox{  from Lemma } \ref{Lemma1} \nonumber  \\
&=& \bar{\lambda} \cdot \frac{d}{d \Delta \lambda} \bigg[ E_{\mu_q^*} [X] - E_{\mu_q^*} [X]^2  \bigg] +\bar{\lambda} \gamma \nonumber \\
&=&  \bar{\lambda} \cdot \bigg[ \frac{d}{d \Delta \lambda} E_{\mu_q^*} [X] + 2 E_{\mu_q^*} [X] \bigg( -\frac{d}{d \Delta \lambda} E_{\mu_q^*} [X]   \bigg) \bigg] +  \bar{\lambda} \gamma, \mbox{  note that  }\frac{d}{d \Delta \lambda}E_{\mu_q^*} [X] < 0  \nonumber  \\
&>&  \bar{\lambda} \cdot \bigg[ \frac{d}{d \Delta \lambda} E_{\mu_q^*} [X] + 2 \bigg(\frac{\bar{\lambda}-1}{\bar{\lambda}}\bigg) \bigg( - \frac{d}{d \Delta \lambda} E_{\mu_q^*} [X]  \bigg)  \bigg] +  \bar{\lambda} \gamma,   \mbox{  by defn of } \mu_q^*, E_{\mu_q^*} [X]  > \frac{\bar{\lambda} - 1}{\bar{\lambda}}  \nonumber \\
&=& (2 - \bar{\lambda} )\cdot  \frac{d}{d \Delta \lambda} E_{\mu_q^*} [X] +  \bar{\lambda} \gamma  \nonumber \\
&>& (2 - \bar{\lambda} )\cdot  \frac{d}{d \Delta \lambda} E_{\mu_q^*} [X] +  \bar{\lambda} \bigg( \frac{\bar{\lambda}-3}{\bar{\lambda}} \frac{d}{d \Delta \lambda} E_{\mu_q^*} [X]  \bigg), \mbox{ because } \gamma > \frac{ \bar{\lambda} -3 }{\bar{\lambda}}\frac{d}{d \Delta \lambda} E_{\mu_q^*} [X] \nonumber \\
&=&  - \frac{d}{d \Delta \lambda} E_{\mu_q^*} [X]
\end{eqnarray}

\end{proof}
\noindent

\begin{proof} of Theorem \ref{PeriodTwoTheorem}. \quad 
From Lemma \ref{Lemma3} we know that $\displaystyle{\frac{d E_{\mu_q^*}[X]}{d \Delta \lambda}} < 0$.  Combining this with Lemma \ref{Lemma4}, it follows that $\displaystyle{\frac{d E_{\mu_p^*}[X]}{d \Delta \lambda}}$ is both positive and greater in magnitude than $\displaystyle{\frac{d E_{\mu_q^*}[X]}{d \Delta \lambda}}$.  Thus, for some interval around $\Delta \lambda = 0$, $E_{\mu_p^*}[X]$  is farther from $p(\bar{\lambda})$ than $E_{\mu_q^*}[X]$ is from $q(\bar{\lambda})$, implying that their mean, $E[X]$, is larger than the expected value of the deterministic map for $\lambda = \bar{\lambda}$.  \end{proof}

\section{Properties of the Deterministic Logistic Map} \label{sec:deterministic_background}

 Here we recall a few properties of the deterministic logistic map, $S_{\lambda}(x)$.  For a  more complete discussion see, for example, \cite{May, Devaney, Chaos}.
 
\begin{itemize}
\item For $\lambda$ in the interval $[0,1]$, 0 is the unique attracting fixed point of $S_{\lambda}(x)$, and, for every $x_0 \in [0,1]$, the orbit $\{ x_n \} \rightarrow 0$ as $n \rightarrow \infty$.
\item For $\lambda$ in the interval $(1, 3]$ there are two fixed points of $S_{\lambda}(x)$ in $[0,1]$: $x^0 = 0$ and $x^1 = \displaystyle{\frac{\lambda -1 }{\lambda}}$.  In this regime, zero is an unstable fixed point, and $x^1$ is attracting.  For every $x_0 \in (0,1)$, the orbit $\{ x_n \} \rightarrow x^1$, the non-zero steady state.  In other words, any initial distribution of $x$-values converges to a measure supported on a single point.
\item For  $\lambda$ in the interval $(3, 1 + \sqrt{6}]$, the two fixed points  (at zero and $\frac{\lambda -1}{\lambda}$) are unstable, and the logistic map has a stable period-$2$ cycle. Almost all initial values in $(0,1)$ have orbits that converge  to the period two orbit \cite[p.~359]{Chaos}, seen in Equations \ref{peq} and \ref{qeq}.  Note the average along the period two orbit is $\displaystyle{\frac{\lambda + 1}{2 \lambda}}$, so that the long-term average along almost every orbit is also $\displaystyle{\frac{\lambda + 1}{2 \lambda}}$.
\item For $\lambda$ in the interval $( 1 + \sqrt{6}, 3.54409 ]$ (approximately) both the fixed points and the period two cycle are  unstable.  However,  the logistic map has a stable attracting period-4 cycle for parameter values in this interval:  the population oscillates between $4$ values.   Almost all initial values of $x_0 \in (0,1)$ have orbits that converge to this period-4 cycle. Therefore, any initial continuous distribution of $x$-values will converge to a distribution supported on these four points.
\item The parameter values at which the system transitions into the stable period 2-regime and into the stable period-4 regime are called {\it bifurcation points}.  Define the parameter value $\lambda_{c_{k}}$ to be the value at which bifurcation into a stable period $k$ regime occurs.  The first few bifurcation points can be calculated as $\lambda_{c_2} = 3$ and $\lambda_{c_4} = 1 + \sqrt{6}$.  Since the period of the attracting cycle doubles at these points, the system is said to undergo a {\it period-doubling} bifurcation.
\item Period doublings occur as $\lambda $ increases.  The bifurcation parameter values get closer and closer together, yielding stable cycles of period 8, 16, 32 and so on. As the period increases, the  $\lambda$-interval between period doubling narrows.
At   $\lambda_{c_{2^\omega}} = \lim_{n \rightarrow \infty} \lambda_{c_{2^n}} \approx 3.56995$, stable periodic orbits of odd period appear in quick succession, according to Sarkovskii's theorem \cite{Devaney}, culminating in a stable period-3 orbit at $\lambda_{c_3} \approx 3.8284$.  
After the period-3 region, the deterministic logistic map exhibits chaotic behavior  \cite{Chaos,Aulbach04}.
\end{itemize}

The long-term behavior of the logistic map $S_\lambda(x)$  as $\lambda$ varies between 0 and 4 can be depicted in a {\it bifurcation diagram}, as shown in Figure~\ref{bifurc}.

\begin{figure} [H]
\begin{center}
\includegraphics[width = .8\textwidth]{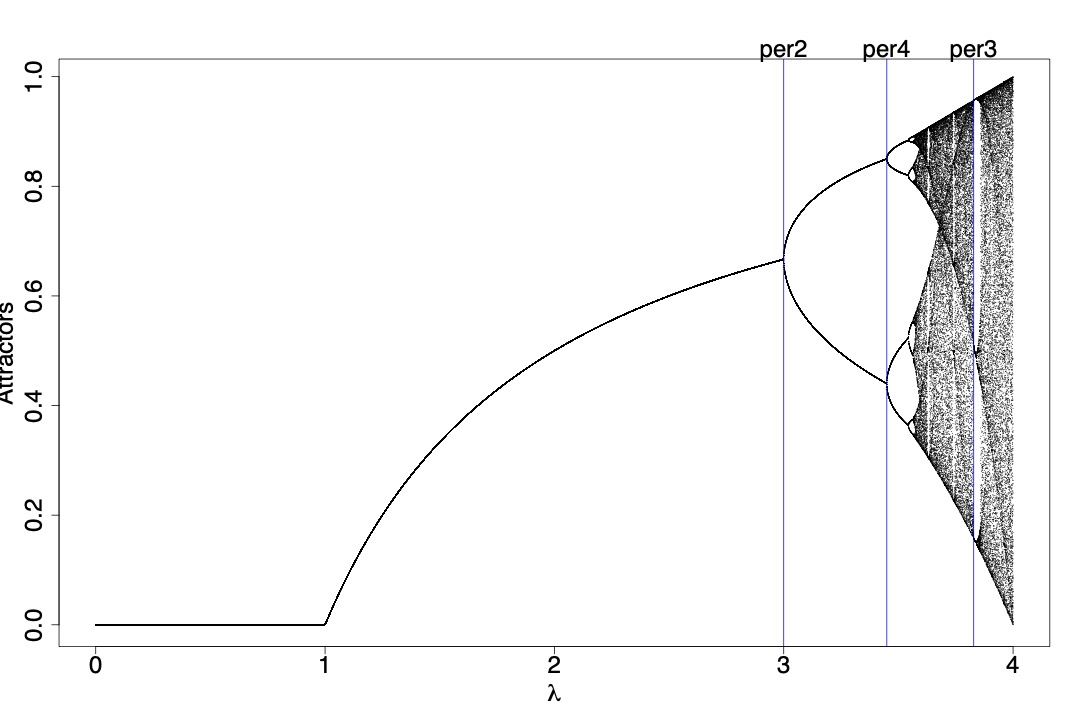}
\end{center}
\caption{Bifurcation diagram for $S_\lambda(x)$.
For each value of the parameter, $\lambda$   {  One hundred initial values of $x_0$ were generated uniformly on [0,1].  For each $x_0$ value, the logistic map was iterated 1000 times for each $\lambda$ on [0,4], in increments of 0.001.  The last iteration for each $x_0$ at each $\lambda$ is plotted.  The bifurcation values for the period 2, period 4 and period 3 bifurcations are indicated as vertical lines.}  }
\label{bifurc}
\end{figure}

\end{appendix}

\section*{Disclosure statement}
The authors report there are no competing interests to declare.

\bibliographystyle{tfs}
\bibliography{meanmean}

\begin{thebibliography}{10}
\providecommand{\MR}{\relax\unskip\space MR }
\providecommand{\url}[1]{\normalfont{#1}}
\providecommand{\urlprefix}{Available at }

\bibitem{Athreya2000}
K. Athreya and J. Dai, \emph{Random logistic maps. i}, Journal of Theoretical
  Probability 13 (2000), pp. 595--608.

\bibitem{Athreya2002}
K. Athreya and J. Dai, \emph{On the nonuniqueness of the invariant probability
  for i.i.d. random logistic maps}, The Annals of Probability 30 (2002), pp.
  437--442.

\bibitem{Aulbach04}
B. Aulbach and B. Kieninger, \emph{An elementary proof for hyperbolicity and
  chaos of the logistic maps}, Journal of Difference Equations and Applications
  10 (2004), pp. 1243--1250.
  \urlprefix\url{https://doi.org/10.1080/10236190410001652810}.

\bibitem{verhulst1838}
N. Baca\"{e}r, \emph{Verhulst and the logistic equation (1838)}, in \emph{A
  Short History of Mathematical Population Dynamics}, Springer, London,  2011.

\bibitem{Bhattacharya2007}
R. Bhattacharya and M. Majumbar, \emph{Random Dynamical Systems: Theory and
  Applications}, Cambridge University Press, Cambridge, 2007.

\bibitem{Boldrin1986}
M. Boldrin and L. Montrucchio, \emph{On the indeterminacy of capital
  accumulation paths}, Journal of Economic Theory 40 (1986), pp. 26--39.
  \urlprefix\url{https://www.sciencedirect.com/science/article/pii/0022053186900050}.

\bibitem{Deneckere1986}
R. Deneckere and S. Pelikan, \emph{Competitive chaos}, Journal of Economic
  Theory 40 (1986), pp. 13--25.
  \urlprefix\url{https://www.sciencedirect.com/science/article/pii/0022053186900049}.

\bibitem{Descheemaeker2020}
L. Descheemaeker and S. de  Buyl, \emph{Stochastic logistic models reproduce
  experimental time series of microbial communities}, eLife 9 (2020), p.
  e55650. \urlprefix\url{https://doi.org/10.7554/eLife.55650}.

\bibitem{Devaney}
R. Devaney, \emph{Introduction to Chaotic Dynamics}, Addison-Wesley Pub.,
  Reading, MA, 1989.

\bibitem{Diaconis1999}
P. Diaconis and D. Freedman, \emph{Iterated random functions}, SIAM Review 41
  (1999), pp. 45--76.

\bibitem{Kamil}
K. Erguler and M.P. Stumpf, \emph{Statistical interpretation of the interplay
  between noise and chaos in the stochastic logistic map}, Mathematical
  Biosciences 216 (2008), pp. 90--99.

\bibitem{Haskell2005}
 {Haskell, C.} and R.J. Sacker, \emph{The stochastic beverton-holt equation and
  the m. neubert conjecture}, Journal of Dynamics and Differential Equations 17
  (2005), pp. 825--844.

\bibitem{Hinow2016}
P. Hinow and A. Radunskaya, \emph{Ergodicity and loss of capacity for a random
  family of concave maps}, Discrete and Continuous Dynamical Systems Series B
  21 (2016), pp. 2193--2210.

\bibitem{Lasota}
A. Lasota and M.C. Mackey, \emph{Chaos, fractals, and noise: stochastic aspects
  of dynamics}, Springer-Verlag, New York, 1994.

\bibitem{May}
R.M. May, \emph{Simple mathematical models with very complicated dynamics},
  Nature 261 (1976), pp. 459--467.

\bibitem{Nakamura2018}
F. Nakamura, \emph{Asymptotic behavior of non-expanding piecewise linear maps
  in the presence of random noise}, Discrete and Continuous Dynamical Systems -
  B 23 (2018), pp. 2457--2473.

\bibitem{Nakamura2021}
F. Nakamura and M.C. Mackey, \emph{Asymptotic (statistical) periodicity in
  two-dimensional maps}, Discrete and Continuous Dynamical Systems - B 27
  (2022), pp. 4285--4303.

\bibitem{Nishimura2004}
K. Nishimura, J. Stachurski, \emph{et~al.}, \emph{Discrete time models in
  economic theory}  (2004).

\bibitem{Ecol}
L.L. Rockwood, \emph{Introduction to Population Ecology}, Blackwell Pub,
  Malden, MA, 2006.

\bibitem{Satoh2001}
D. Satoh and S. Yamada, \emph{Discrete equations and software reliability
  growth models}, in \emph{Proceedings 12th International Symposium on Software
  Reliability Engineering}. 2001, pp. 176--184.

\bibitem{Satoh2021}
D. Satoh, \emph{Discrete gompertz equation and model selection between gompertz
  and logistic models}, International Journal of Forecasting 37 (2021), pp.
  1192--1211.
  \urlprefix\url{https://www.sciencedirect.com/science/article/pii/S0169207021000054}.

\bibitem{Schenk-Hoppe1997}
K.R. Schenk-Hoppe\'e, \emph{Bifurcations of the randomly perturbed logistic
  map.}, Discussion Paper No. 252, Dept. of Economics, University of Bielefeld
  (1997).

\bibitem{Chaos}
S.H. Strogatz, \emph{Nonliniear Dynamics and Chaos: With Applications to
  Physics, Biology, Chemistry, and Engineering}, Addison-Wesley Pub., Reading,
  MA, 1994.

\bibitem{Ulam1947}
S. Ulam and J. von  Neumann, \emph{On combination of stochastic and
  deterministic processes}, Bulletin of the American Mathematical Society 53
  (1947), p. 1120.

\end{thebibliography}

\end{document}